\theoremstyle{plain}
\newtheorem{theorem}{Theorem}
\newtheorem{lemma}[theorem]{Lemma}
\newtheorem{conjecture}[theorem]{Conjecture}
\newtheorem{corollary}[theorem]{Corollary}
\def\cref#1{Conjecture~$\ref{#1}$}
\def\Cref#1{Corollary~$\ref{#1}$}
\renewcommand{\geq}{\geqslant}
\renewcommand{\leq}{\leqslant}
\def\dfrac#1#2{\lower0.15ex\hbox{\large$\frac{#1}{#2}$}} 
\title{Balanced diagonals in frequency squares}
\author{
Nicholas J. Cavenagh\thanks{
Department of Mathematics,
 The University of Waikato,
 Private Bag 3105,
 Hamilton 3240, New Zealand.}\\
\texttt{nickc@waikato.ac.nz}\\ 
\and
Adam Mammoliti
\thanks{
School of Mathematics and Statistics, UNSW Sydney, NSW 2052, Australia.}\\ 
\texttt{a.mammoliti@unsw.edu.au}\\ 
}
\begin{document}

\date{}
\maketitle

\begin{abstract}
We say that a diagonal in an array is {\em $\lambda$-balanced} if each entry occurs $\lambda$ times.
Let $L$ be a frequency square of type $F(n;\lambda^m)$; that is, an $n\times n$ array in which each entry from $\{1,2,\dots ,m\}$ occurs $\lambda$ times per row and $\lambda$ times per column.
We show that if $m\leq 3$, $L$ contains a $\lambda$-balanced diagonal, with only one exception  up to equivalence when $m=2$.  
We give partial results for $m\geq 4$ and suggest a generalization of Ryser's conjecture, that every latin square of odd order has a transversal. 
Our method relies on first identifying a small substructure with the frequency square that facilitates the task of locating a balanced diagonal in the entire array. 
\end{abstract}

\noindent {\bf MSC 2010 Codes: 05B15; 05C15}

\noindent {Keywords: Frequency square, Latin square, Ryser's conjecture, transversal.}  

\section{Introduction}
In what follows, rows and columns of an $n\times n$ array $L$ are each indexed by $N(n)=\{1,2,\dots ,n\}$, with 
$L_{i,j}$ denoting the {\em entry} in cell $(i,j)$. We sometimes consider an array~$L$ to be a set of ordered triples $L=\{(i,j;L_{i,j})\}$ so that the notion of a subset of an array is precise. 
A subarray of $L$ is any array induced by subsets of the rows and columns of $L$; thus the rows and columns in a subarray need not be adjacent. 

A {\em frequency square} or {\em $F$-square} $L$ of {\em type} $F(n;\lambda_1,\lambda_2,\dots ,\lambda_m)$ is an $n\times n$ array such that for each $i\in N(m)$, $i$ occurs $\lambda_i$ times in each row and $\lambda_i$ times in each column; necessarily $\sum_{i=1}^m \lambda_i=n$. 
In the case where $\lambda_1=\lambda_2=\dots =\lambda_m=\lambda$ we say that $L$ is of type $F(n;\lambda^m)$, where $n=\lambda m$.
Clearly a frequency square of type $F(n;1^n)$ is a Latin square of order $n$.

We define a {\em diagonal} in any square array to be a subset that uses each row and each column exactly once. 
We say that a diagonal is $\lambda$-balanced if each entry occurs  $\lambda$ times, for some $\lambda$. 
Thus, a $1$-balanced diagonal in a Latin square is precisely a transversal.

In this paper we restrict ourselves to frequency squares of type $F(n;\lambda^m)$; in this context we refer to a $\lambda$-balanced diagonal as simply being {\em balanced}; here each element of $N(m)$ appears exactly $\lambda$ times.  
For our purposes, two frequency squares of type $F(n;\lambda^m)$ 
 are equivalent if and only if one can be obtained from the other by rearranging rows or columns, relabelling symbols or taking the transpose.  

Trivially, any diagonal of a frequency square of type $F(\lambda;\lambda)$ is balanced. 
In Section 2 we show, with one exception up to equivalence, that each frequency square of type $F(2\lambda;\lambda,\lambda)$ has a balanced diagonal. In Section 3 we show that {\em every}  frequency square of type $F(3\lambda;\lambda,\lambda,\lambda)$ has a balanced diagonal.   
We then make some observations and conjectures about the existence of balanced diagonals in $F(m\lambda;\lambda^m)$ for $m >3$ in Section 4.

The existence of {\em transversals} in arrays (diagonals in which each entry appears at most once) or, equivalently, {\em rainbow matchings} in coloured bipartite graphs, has been well-studied \cite{ACH12,transurvey}. However there appears to be scarce results on the existence of diagonals in which each entry has a fixed number of multiple occurrences. 
Nevertheless, the existence of transversals (and other regular structures called {\em plexes}) in Latin squares imply the existence of balanced diagonals in certain frequency squares, as shown in Section 4. 
 Conversely, the results in this paper suggest a certain generalization of Ryser's conjecture, that each Latin square of odd order has a transversal; see Conjecture \ref{bigandbold}.  

 Research into frequency squares focuses mainly on constructing sets of pairwise orthogonal frequency squares, where each ordered pair occurs a constant number of times \cite{LM,LZD,Mav,JMM}. Here, in general, the existence of a balanced diagonal is not necessary.  ELABORATE? GIVE EXAMPLE?

Instead of starting with any diagonal and trying to permute rows and columns to make it balanced, we obtain our main results in Sections 2 and 3 by first identifying a subarray that allows us to construct a diagonal within the rest of the square that is {\em close} to being balanced. The properties of the subarray then allow us to find a balanced diagonal in the entire square. This approach makes the proof of Theorem \ref{main2} in particular delightfully terse (compared to an originally drafted much longer proof) and the proof of Theorem \ref{main3} manageable. This idea may be of use towards the solution of related combinatorial problems.  

\section{Balanced diagonals in frequency squares with $2$ symbols}

Let $A_{2\lambda}$ be the frequency square of type $F(2\lambda;\lambda,\lambda)$ 
with only $1$'s in the top-left and bottom-right quadrants, formally  
defined as follows: 
$$A_{2\lambda}=
\{(i,j;1),(i+\lambda,j+\lambda;1),(i,j+\lambda;2),(i+\lambda,j;2) \mid i,j\in N(\lambda)\}.$$ 

$$\begin{array}
{|c|c|c|c|c|c|}
\hline
1 & 1 & 1 & 2 & 2 & 2 \\
\hline
1 & 1 & 1 & 2 & 2 & 2 \\
\hline
1 & 1 & 1 & 2 & 2 & 2 \\
\hline
2 & 2 & 2 & 1 & 1 & 1 \\
\hline
2 & 2 & 2 & 1 & 1 & 1 \\
\hline
2 & 2 & 2 & 1 & 1 & 1 \\
\hline
\multicolumn{6}{c}{A_6} 
\end{array}$$
\begin{lemma}
The frequency array $A_{2\lambda}$ possesses a balanced diagonal if and only if $\lambda$ is even. 
\label{tobe}
\end{lemma}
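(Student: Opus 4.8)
The plan is to prove both directions through a single structural observation about how any diagonal must distribute itself among the four $\lambda\times\lambda$ quadrants of $A_{2\lambda}$. Recall that the entry in cell $(i,j)$ equals $1$ precisely when $i$ and $j$ lie in the same half of $N(2\lambda)$ (both at most $\lambda$, or both exceeding $\lambda$), and equals $2$ otherwise. So first I would fix a diagonal $D=\{(i,\sigma(i)):i\in N(2\lambda)\}$ for a permutation $\sigma$, and let $a,b,c,d$ count the cells of $D$ lying respectively in the top-left, top-right, bottom-left and bottom-right quadrants.

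The key step is to observe that, because $\sigma$ uses each row and each column exactly once, counting the top rows gives $a+b=\lambda$, counting the left columns gives $a+c=\lambda$, and counting the bottom rows gives $c+d=\lambda$. Together these force $b=c$ and $a=d$. Since the $1$-entries of $D$ are exactly those in the top-left and bottom-right quadrants, the number of $1$'s on $D$ equals $a+d=2a$, which is even. A balanced diagonal requires each symbol to occur $\lambda$ times, so we would need $2a=\lambda$; this is impossible when $\lambda$ is odd, establishing the forward direction.

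For the converse, when $\lambda$ is even I would simply exhibit a diagonal realising $a=d=b=c=\lambda/2$. For instance, send the first $\lambda/2$ top rows to the first $\lambda/2$ left columns, the remaining $\lambda/2$ top rows to the first $\lambda/2$ right columns, the first $\lambda/2$ bottom rows to the last $\lambda/2$ left columns, and the last $\lambda/2$ bottom rows to the last $\lambda/2$ right columns. One checks directly that this is a permutation of $N(2\lambda)$, and that it contributes $\lambda/2$ ones from each of the two $1$-quadrants and $\lambda/2$ twos from each of the two $2$-quadrants, hence exactly $\lambda$ ones and $\lambda$ twos.

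Since both directions reduce to elementary counting and one explicit construction, there is no serious obstacle here; the only point requiring any care is the deduction $a=d$, which is really the entire content of the lemma, as it is precisely what pins the number of $1$'s on any diagonal of $A_{2\lambda}$ to an even value. Note also that the case $\lambda=1$ recovers the classical fact that the order-$2$ Latin square admits no transversal, consistent with the ``one exception up to equivalence'' announced for $m=2$.
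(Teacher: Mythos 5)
Your proposal is correct and follows essentially the same argument as the paper: both count how a diagonal distributes over the four quadrants, deduce that the number of $1$'s on any diagonal is even, and conclude that a balanced diagonal exists precisely when $\lambda$ is even. The only difference is that you spell out the explicit construction for even $\lambda$, which the paper leaves as ``easy to find.''
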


\begin{proof}
It is easy to find a balanced diagonal if $\lambda$ is even. 
If $\lambda$ is odd, 
suppose that $A_{2\lambda}$ possesses a balanced diagonal $M$ with $x$ elements in cells $(i,j)$ where $i,j\in N(\lambda)$. 
  Then $M$ has $\lambda-x$ elements in cells $(i,j)$ where $i,j-\lambda\in N(\lambda)$ and in turn, 
$x$ elements in cells $(i,j)$ where  $i-\lambda,j-\lambda\in N(\lambda)$.
Thus, $2x$ elements of $M$ contain entry $1$, contradicting the fact that $\lambda$ is odd. 
\end{proof}

\begin{theorem}
Let $L$ be a frequency square of type $F(2\lambda;\lambda,\lambda)$. Then $L$ has a balanced diagonal, unless $L$ is equivalent to $A_{2\lambda}$ where $\lambda$ is odd. 
\label{main2}
\end{theorem}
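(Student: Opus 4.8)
The plan is to dichotomize according to a simple local feature of $L$, namely whether some $2\times 2$ subarray contains an odd number of $1$'s (equivalently an odd number of $2$'s). Viewing $L$ as a $0/1$ matrix $B$ with $B_{i,j}=1$ exactly when $L_{i,j}=1$, this feature has a clean global meaning. If \emph{every} $2\times 2$ subarray carries an even number of $1$'s, then $B_{i,j}\oplus B_{i',j}\oplus B_{i,j'}\oplus B_{i',j'}=0$ for all $i,i',j,j'$, which forces $B_{i,j}=x_i\oplus y_j$ over $\mathbb{F}_2$; equivalently, any two rows of $B$ are either identical or complementary. Using that each row sum and column sum equals $\lambda$, I would check that this pins $B$ down to the block pattern of $A_{2\lambda}$ up to permuting rows and columns and relabelling symbols. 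Thus the first---and I expect routine---step is to prove that $L\cong A_{2\lambda}$ if and only if no $2\times 2$ subarray of $L$ has an odd number of $1$'s. In the block case \lref{tobe} finishes the argument, delivering exactly the stated exception ($\lambda$ odd) and a balanced diagonal when $\lambda$ is even.

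It remains to treat the case where some $2\times 2$ subarray $S$, on rows $\{a,b\}$ and columns $\{c,d\}$, is odd; after relabelling I may assume it carries three $1$'s with its unique $2$ in cell $(b,d)$. The point of $S$ is that its two transversals $\{(a,d),(b,c)\}$ and $\{(a,c),(b,d)\}$ contribute two $1$'s and one $1$ respectively, so swapping between them adjusts the number of $1$'s on a diagonal by exactly $1$ while reusing the same two rows and columns. The skeleton of the construction is then: first produce a diagonal whose number of $1$'s is close to $\lambda$, and then correct it to exactly $\lambda$ using $S$ together with length-preserving exchanges.

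For the ``close to $\lambda$'' part I would invoke K\"onig's theorem: since the $1$'s and the $2$'s each form a $\lambda$-regular bipartite graph, $L$ admits a diagonal with $2\lambda$ ones and a diagonal with $0$ ones. Interpolating between two diagonals by flipping the alternating cycles of their symmetric difference changes the number of $1$'s in controllable amounts, so the reachable counts form a well-structured set; a short averaging remark (a uniformly random diagonal carries $\sum_{i,j}B_{i,j}/(2\lambda)=\lambda$ ones in expectation) also guarantees diagonals with more than $\lambda$ and with fewer than $\lambda$ ones. Combining these exchanges with the parity-breaking swap furnished by $S$, I aim to show that the set of attainable $1$-counts is all of $\{0,1,\dots,2\lambda\}$, and in particular contains $\lambda$.

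I expect the genuine difficulty to be exactly this last exact-count control. Alternating-cycle flips between the all-$1$ and all-$0$ diagonals only ever change the count in even lumps, since every such cycle meets the $1$'s and the $2$'s equally often; so for odd $\lambda$ these moves alone cannot reach $\lambda$---which is precisely the obstruction that makes $A_{2\lambda}$ with $\lambda$ odd a genuine exception. The crux is therefore to show that when $L\not\cong A_{2\lambda}$ the odd subarray $S$ (or, more generally, a pair of rows that is neither identical nor complementary) can be threaded into an alternating path so as to realize a net change of exactly $1$, thereby bridging the parity gap and closing the interval of attainable counts around $\lambda$.
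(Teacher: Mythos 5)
Your opening reduction is sound and matches the paper's: if no $2\times 2$ subarray of $L$ carries an odd number of $1$'s then $L$ is equivalent to $A_{2\lambda}$ (the paper phrases this as $L$ not containing the subarray with three $1$'s and one $2$), and \lref{tobe} handles that case. You have also correctly identified the key idea of the paper's proof, namely that a single odd $2\times 2$ subarray $S$ is the gadget that repairs parity. But the proposal has a genuine gap: the entire middle of the argument --- producing a diagonal whose number of $1$'s is exactly $\lambda$ --- is never carried out. You write that you ``aim to show'' the attainable counts form all of $\{0,1,\dots,2\lambda\}$ and explicitly defer ``the crux'' of threading $S$ into an alternating exchange; that crux \emph{is} the theorem, so nothing is actually proved beyond the reduction. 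Moreover, the one structural claim you do make about the interpolation --- that alternating-cycle flips between the all-$1$ and all-$2$ diagonals ``only ever change the count in even lumps'' --- is false in general: a cycle of length $2k$ alternating between the two diagonals changes the $1$-count by $k$, and $k$ can be odd. (The true reason $A_{2\lambda}$ fails for odd $\lambda$ is the block-counting argument of \lref{tobe}, not a parity property of alternating cycles.)

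For comparison, the paper closes exactly this gap with a short extremal argument that avoids K\"onig's theorem and alternating cycles altogether: fix the two rows and columns carrying $S$, permute the remaining rows and columns so as to minimise $|x-\lambda|$ where $x$ is the number of $1$'s on the main diagonal, and then show that if $|x-\lambda|\geq 2$ a single row swap among the last $2\lambda-2$ rows strictly decreases it (the case $x>\lambda$ is where the hypothesis $L\not\cong A_{2\lambda}$ is used a second time: if no decreasing swap exists one finds a $\lambda\times\lambda$ all-$1$ block), while the residual cases $|x-\lambda|=1$ are repaired by swapping the two rows of $S$. If you want to salvage your route, you would need an analogous concrete mechanism --- e.g.\ an explicit alternating path through $S$ realising a net change of $\pm 1$ --- and at present no such mechanism is exhibited.
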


\begin{proof}
Let $L$ be a frequency square of type $F(2\lambda;\lambda,\lambda)$. 
Observe that if $L$ does not possess the following subarray, it must be equivalent to $A_{2\lambda}$ and the previous lemma applies. 
$$\begin{array}{|c|c|}
\hline
1 & 1 \\
\hline
1 & 2 \\
\hline
\end{array}$$
Otherwise, we assume without loss of generality that 
$L_{1,1}=L_{1,2}=L_{2,1}=1$ and $L_{2,2}=2$.
 Let $M$ be the main diagonal and let $x$ be the number of $1$'s in $M$. Rearrange the rows and columns (except for the first two rows and columns) so that $|x-\lambda|$ is minimized. 
If $x-\lambda=0$ the main diagonal is balanced and we are done. 
If $x-\lambda=-1$, we can swap rows $1$ and $2$ and the main diagonal becomes a balanced diagonal. 
 
Suppose that $x-\lambda<-1$. Then there must exist $r,r'>2$ such that cells $(r,r)$ and $(r',r')$ each contain $2$ and $(r,r')$ contains $1$. Swapping rows $r$ and $r'$ reduces $|x-\lambda|$ by either $1$ or~$2$, a contradiction. If 
$x-\lambda\geq 1$, then there must exist $r,r'>2$ such that cells $(r,r)$ and $(r',r')$, each containing $1$, with $(r,r')$ containing $2$; if not, there is a $\lambda\times \lambda$ subarray containing only $1$'s and our array is equivalent to $A_{2\lambda}$, a contradiction. 
 Swapping rows $r$ and $r'$ reduces $|x-\lambda|$ by at least one unless $x-\lambda=1$ and $(r',r)$ contains $2$. In that case we can further swap rows $1$ and $2$ to create a balanced main diagonal.  
\end{proof}

\section{Balanced diagonals in frequency squares with $3$ symbols}

In this section we prove the following theorem. 
\begin{theorem}
Let $L$ be a frequency square of type $F(3\lambda;\lambda,\lambda,\lambda)$. Then $L$ has a balanced diagonal. 
\label{main3}
\end{theorem}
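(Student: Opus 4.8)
The plan is to transplant the strategy behind \tref{main2}: isolate a small, flexible subarray, use the remaining rows and columns to assemble a diagonal whose symbol counts are already close to balanced, and then spend the subarray's flexibility to kill the residual imbalance.

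Write $n=3\lambda$. To each diagonal $M=\{(i,\sigma(i)):i\in N(n)\}$ attach the count vector $x=(x_1,x_2,x_3)$, where $x_s=|\{i:L_{i,\sigma(i)}=s\}|$, and the deviation $d=x-(\lambda,\lambda,\lambda)$; since $x_1+x_2+x_3=n$ we always have $d_1+d_2+d_3=0$, so $d$ lives in a two-dimensional lattice and the goal is $d=0$. I will track the potential $\Phi=d_1^2+d_2^2+d_3^2$. The basic move is to swap the images of two rows $i,j$: this replaces the diagonal pair $\{L_{i,\sigma(i)},L_{j,\sigma(j)}\}$ by $\{L_{i,\sigma(j)},L_{j,\sigma(i)}\}$. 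A short computation shows that a \emph{root step} $e_a-e_b$ (one more $a$, one fewer $b$) changes $\Phi$ by $2(d_a-d_b+1)$, so it strictly helps exactly when $d_b\ge d_a+2$. Because three integers summing to $0$ that are not all equal have maximum minus minimum at least $2$, every nonzero $d$ admits, in principle, a $\Phi$-reducing root step, namely the one trading the most over-represented symbol for the most under-represented one. Thus the entire difficulty is to guarantee that the board actually realizes the needed step.

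First I would handle the degenerate case. Mirroring the corner $\left(\begin{smallmatrix}1&1\\1&2\end{smallmatrix}\right)$ of \tref{main2}, I would identify a small configuration whose presence hands us a guaranteed pivot (a swap realizing a prescribed root step on a fixed pair of rows and columns). A frequency square avoiding every such configuration should collapse to a block square: the blow-up of a $3\times 3$ Latin square $B$, each cell replaced by a constant $\lambda\times\lambda$ block. This is the point where $m=3$ diverges from $m=2$: whereas the block square $A_{2\lambda}$ can fail to have a balanced diagonal (\lref{tobe}), every $3\times 3$ Latin square $B$ has a transversal $T$, and taking $\lambda$ diagonal cells from each of the three blocks indexed by $T$ produces a balanced diagonal outright. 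Disposing of the block case directly is exactly why no exceptional square survives for $m=3$.

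In the generic case I would place the flexible subarray in a corner, build the near-balanced diagonal on the complementary rows and columns, and then, over all rearrangements that keep the corner fixed, choose one minimizing $\Phi$. If $\Phi>0$, let $a$ index the most under-represented symbol and $b$ the most over-represented, so $d_b\ge d_a+2$. I would argue that either a single swap among the bulk rows realizes the step $e_a-e_b$ --- contradicting minimality --- or, when the bulk cannot supply that step, the corner pivot supplies the missing ingredient, possibly composed with one bulk swap, exactly as the $x-\lambda=1$ endgame is finished in \tref{main2} by the extra swap of rows $1$ and $2$.

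The main obstacle is the two-dimensionality of $d$. Unlike the single parameter $x-\lambda$ of \tref{main2}, I cannot drive the imbalance to zero by one kind of move, so the heart of the proof is a case analysis over the few sign-patterns of $d$ up to symmetry, such as $(1,0,-1)$, $(2,-1,-1)$ and $(1,1,-2)$, showing in each that the $\lambda$-regularity of the rows and columns forces a suitable diagonal cell of symbol $b$ together with a partner whose off-diagonal entries deliver the trade to symbol $a$. The delicate part is the counting: since symbol $b$ is over-full and symbol $a$ under-full on the current diagonal, regularity should guarantee rows $i,j$ with the required off-diagonal pattern, and I must also ensure the corner pivots and the bulk swaps act on disjoint rows and columns so that the diagonal remains intact. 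Pinning down precisely which root steps the corner must guarantee, so that every pattern the bulk cannot fix is covered, is where the argument will demand the most care.
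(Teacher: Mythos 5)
Your overall architecture (corner pivot, near-balanced diagonal on the bulk, potential minimization, spend the corner's flexibility last) is indeed the paper's strategy, but two essential pieces do not hold up. First, the degenerate case is wrong: avoiding every configuration $\left(\begin{smallmatrix}e&e\\e&f\end{smallmatrix}\right)$ does \emph{not} collapse $L$ to the $\lambda$-fold blow-up of a $3\times 3$ Latin square. Avoidance for symbol $e$ only forces the $e$-cells to be a disjoint union of three $\lambda\times\lambda$ rectangles, and the three rectangle systems for the three symbols need not be aligned. For $\lambda=2$, take each row constant on the column blocks $\{1,2\},\{3,4\},\{5,6\}$ with underlying $6\times 3$ rows $123,132,312,213,231,321$: every symbol class is a union of three disjoint $2\times 2$ rectangles, so all six configurations are avoided, yet this is not a blow-up and there is \emph{no} choice of three row- and column-disjoint monochromatic rectangles, one per symbol — so your ``take a transversal of $B$'' construction produces nothing (the square does have a balanced diagonal, but it must be found differently). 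This is precisely why the paper's \lref{nonny} assumes avoidance for a \emph{single} symbol and even then needs a substantial argument (the diagonals $X,Y$, good swaps, and a counting contradiction) rather than reading off a transversal of a $3\times 3$ square.

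Second, the generic case is a plan rather than a proof, and the toolkit you commit to — single transpositions among bulk rows, plus one corner pivot possibly composed with one bulk swap — is too weak. You correctly identify that the two-dimensionality of $d$ is the obstacle and that ``pinning down precisely which root steps the corner must guarantee'' is the delicate point, but that is not a detail to be filled in: it is the bulk of the theorem. The paper needs a $4\times 4$ corner carrying four distinct diagonals $D_1,\dots,D_4$ (\lref{setup}), whose existence requires two preparatory lemmas and a separate treatment of $\lambda=2$; and in the endgame its ``good permutations'' include $3$-cycles of rows or columns, products of two disjoint transpositions, a global relabelling of symbols, and the counting bound of \lref{halfy} — none of which appear in your sketch. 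Nothing you write rules out a local minimum of $\Phi$ at which no single transposition realizes the needed root step, and the paper's Case 4 shows such configurations must be fought with heavier machinery.
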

Throughout this section, $L$ is a frequency square of type $F(3\lambda;\lambda,\lambda,\lambda)$.

\begin{lemma}\label{lem: must contain subsquare}
Let $\lambda\geq 2$ and let $e\in \{1,2,3\}$. Suppose that for each $f\in \{1,2,3\}\setminus \{e\}$, 
$L$ does not contain the following subarray: 
$$
\begin{array}{|c|c|}
\hline
e & e \\
\hline
e & f \\
\hline
\end{array} \;.$$
Then $L$ contains a balanced diagonal. 
\label{nonny}
\end{lemma}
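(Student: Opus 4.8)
The plan is to first convert the forbidden-subarray hypothesis into rigid structural information about where $e$ sits, and only then hunt for the diagonal. Write $B$ for the $0/1$ indicator array of $e$, so $B_{i,j}=1$ exactly when $L_{i,j}=e$; every row and every column of $B$ has exactly $\lambda$ ones. The hypothesis says precisely that no $2\times 2$ subarray of $L$ has three $e$'s and one non-$e$ entry, i.e.\ $B$ has no $2\times 2$ submatrix with exactly three $1$'s. Examining a pair of rows $i,i'$, a three-ones submatrix occurs exactly when they share a column of common $1$'s and also a column where exactly one of them is $1$; hence for every pair of rows the $e$-supports are either \emph{disjoint} or \emph{identical}, and likewise (by transposing) for every pair of columns.

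I would then package this as a block decomposition. Grouping rows by their common $e$-support gives pairwise disjoint column-sets, each of size $\lambda$; since each column carries $\lambda\ge 1$ copies of $e$ these sets cover all $3\lambda$ columns, forcing exactly three groups, each of $\lambda$ rows, with matching column blocks of size $\lambda$. After permuting rows and columns, $L$ becomes a $3\times 3$ array of $\lambda\times\lambda$ blocks whose three diagonal blocks are constantly $e$ and whose six off-diagonal blocks use only the two symbols $\{f,g\}=\{1,2,3\}\setminus\{e\}$.

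Now the target is a diagonal meeting the three diagonal blocks in exactly $\lambda$ cells (to collect $\lambda$ copies of $e$) whose remaining $2\lambda$ cells, all lying in off-diagonal blocks, split evenly between $f$ and $g$. The natural candidates are the three \emph{orientations}: traverse one diagonal block together with the two opposite off-diagonal blocks, say blocks $(1,1),(2,3),(3,2)$. A short line count—each row and each column contains $\lambda$ copies of $f$ off the diagonal blocks—shows that the number of $f$'s in block $(a,b)$ depends on a single parameter and that the two off-diagonal blocks of any orientation contain exactly $\lambda^{2}$ copies of $f$ between them. Thus in every orientation the \emph{average} number of $f$'s collected by a diagonal is exactly $\lambda$, a strong hint that balance is attainable.

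The main obstacle is upgrading this averaging statement to an exact hit. A single fixed orientation can genuinely fail: the matchings available inside one block may all carry the same wrong-parity number of $f$'s, so that the two blocks' contributions never sum to $\lambda$ (I would record the smallest such configuration, a $6\times 6$ square, as a sanity check). The remedy is to allow more freedom—either to switch among the three orientations, or, better, to perform exchange moves (transpositions and short cycle rotations) that route one step through a diagonal block, changing the $f$-versus-$g$ imbalance by exactly one while \emph{preserving} the number of $e$'s collected. I would organise these into an intermediate-value argument: exhibit diagonals with $f$-surplus of each sign, show that the admissible diagonals (those meeting the diagonal blocks in $\lambda$ cells) are connected by unit-changing moves, and conclude that imbalance $0$ is realised. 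Guaranteeing that the steps can be taken in unit size—precisely where the per-row and per-column balance of $\{f,g\}$ off the diagonal blocks, together with the hypothesis $\lambda\ge 2$ giving room to manoeuvre, must be used—is the delicate point; everything before it is bookkeeping.
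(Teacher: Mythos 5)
Your structural reduction is correct and coincides with the paper's: the forbidden $2\times 2$ pattern forces the $e$-supports of any two rows (and any two columns) to be disjoint or identical, yielding the $3\times 3$ block decomposition with constant-$e$ diagonal blocks, and the line-counting observation that the two off-diagonal blocks of an orientation carry exactly $\lambda^2$ copies of $f$ between them is exactly the count the paper uses. Up to that point you are reproducing the published argument.

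The gap is the step you yourself flag as ``the delicate point'': you assert, but do not prove, that the admissible diagonals are connected by moves changing the $f$-surplus by exactly one, and the lemma lives or dies on that claim. If your exchange moves are confined to row/column transpositions inside the two off-diagonal blocks of the chosen orientation, a single swap can change the surplus by $2$ rather than $1$ (both newly entered cells flip the same way), so the reachable surpluses can skip over $0$; this is precisely why the paper cannot finish with swaps alone. Its proof first shows by the $\lambda^2$ count that a surplus-reducing swap exists whenever the surplus is at least $2$, and then must treat the boundary case (surplus exactly $1$, i.e.\ $x+y=\lambda+1$ in its notation) by an explicit three-cell rerouting of the diagonal through the blocks $B_{13}$, $B_{31}$ and $B_{33}$, itself requiring a further case split on whether a suitable $3$ exists in $B_{13}$. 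Your ``short cycle rotations that route one step through a diagonal block'' is the right kind of move for this, but exhibiting one that is always available in the stuck configuration is where the real work of the lemma sits, and the intermediate-value framing does not supply it. You would also want to dispose separately of the degenerate case where every off-diagonal block is constant (handled in the paper by choosing the orientation whose off-diagonal blocks are the $f$-block and the $g$-block), since there your surplus can fail to vary at all within a fixed orientation.
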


\begin{proof}
Without loss of generality, let $e=1$. It can quickly be shown that the rows and columns of $L$ can be partitioned into sets $R_1,R_2,R_3$ and $C_1,C_2,C_3$, respectively, where each set is of size $\lambda$ and the cells in $R_i\times C_i$ contain only symbol $1$, for each $1\leq i\leq 3$. 
Let $B_{ij}$ be the subarray formed by the intersection of the rows in $R_i$ and the columns in $C_j$; we call such subarrays {\em blocks}. 

We know that $B_{11}$, $B_{22}$, $B_{33}$ each contain only the symbol $1$. Suppose that one of the remaining blocks contains only the symbol $2$; it quickly follows that each block contains only one type of symbol. 
In this case diagonals from each of $B_{13}$, $B_{22}$ and $B_{31}$ together form a balanced diagonal. 
Otherwise, without loss of generality, each block not on the main diagonal contains at least one $2$ and at least one $3$. 

Let $X$ be the main diagonal from $B_{12}$  
(i.e. on the cells $\{(i,\lambda+i)\mid i\in N(\lambda)\}$), and let~$Y$ be the main diagonal from $B_{21}$.
Let $x$ and $y$ be the number of $3$'s in diagonals $X$ and $Y$, respectively.
If $x+y=\lambda$, we can construct a balanced diagonal by adding a diagonal from block $B_{33}$. 
Otherwise, without loss of generality, $x+y> \lambda$. 
We call a rearrangement of rows and columns that preserves the block structure  {\em good} if it reduces $x+y$ by at most $x+y-\lambda$. 
 
Partition $C_{2}$ into sets of columns $D_1$ and $D_2$ and $R_{1}$ into sets of 
rows $S_1$ and $S_2$, so that $S_{i-1}\times D_{i-1}$ contains the $i$'s from $X$, 
where $i\in \{2,3\}$. 
Similarly, 
partition $R_{2}$ into sets of rows $T_1$ and $T_2$ and $C_{1}$ into sets of 
 columns $E_1$ and $E_2$, so that $T_{i-1}\times E_{i-1}$ contains the $i$'s from $Y$, 
where $i\in \{2,3\}$. 

Suppose that $x+y\geq \lambda+2$ and there are no good swaps. 
Then there exists no $2$ in $S_2\times D_2$; otherwise we can swap rows within $S_2$ and columns within $D_2$ to reduce $x+y$ by either $1$ or~$2$. 
Similarly, $T_2\times E_2$ contains only $3$'s. 
If both $(r,c) \in S_1 \times D_2$ and $ (c-\lambda,r+\lambda)) \in S_2\times D_1$ contain a $2$, 
then swapping row $r$ with row $c-\lambda$ is a good swap; a contradiction. 
Thus, at most one of $(r,c) \in S_1 \times D_2$ and $(c-\lambda,r+\lambda) \in S_2\times D_1$ contains a $2$. 
Similarly, 
at most one of $(r,c) \in T_1 \times E_2$ and $ (c+\lambda,r-\lambda) \in T_2\times E_1$ contains a $2$.
Hence, the number of $3$'s in $B_{12}\cup B_{21}$ is at least $x^2+y^2+x(\lambda-x)+y(\lambda-y)=\lambda(x+y)>\lambda^2$. 

However, since each column contains $\lambda$ $3$'s, if there are $\alpha$ $3$'s in $B_{12}$, there are $\lambda^2-\alpha$ $3$'s in $B_{32}$. In turn, there are $\alpha$ 
$3$'s in $B_{31}$ and $\lambda^2-\alpha$ $3$'s in $B_{21}$. 
Therefore, the total number of~$3$'s in $B_{12}\cup B_{21}$ is equal to $\lambda^2$, a contradiction. 

So if $x+y>\lambda+1$ a good swap always exists; recursively we can always apply a 
series of good swaps until $x+y\in \{\lambda,\lambda+1\}$.  
So we are left with the case when $x+y=\lambda+1$. 
Since $\lambda\geq 2$, either $x\geq 2$ or $y\geq 2$. By symmetry we may assume that $x\geq 2$. Then, since $2\leq x\leq \lambda$, we have that $1\leq y\leq \lambda-1$. Thus, 
$X$ has distinct cells $(r_1,c_1),(r_2,c_2)$ each containing $3$ and 
$Y$ has a cell $(r_3,c_3)$ containing $2$ and a cell $(r_4,c_4)$ containing 
$3$. 

Since cell $(r_4,c_4)$ contains $3$, there exists a row $r_5\in R_3$ such that 
$(r_5,c_4)$ contains $2$. 
Suppose that there exists a column $c_5\in C_3$ such that $(r_2,c_5)$ contains $3$.
Let $Z$ be a diagonal from $B_{3,3}$ which includes $(r_5,c_5)$.  
We can construct a balanced diagonal  
by including $X$ (except for cell $(r_2,c_2)$),  $Y$ (except for cell $(r_4,c_4)$), $Z$ (except for cell $(r_5,c_5)$) and the 
cells $(r_4,c_2)$, $(r_2,c_5)$ and $(r_5,c_4)$.  
 
Otherwise, row $r_2$ contains only $2$'s within $B_{13}$. 
Similarly, column $c_4$ contains only $2$'s within $B_{31}$. 
Therefore cell $(r_3,c_4)$ contains $3$. 
Thus, there exists a column $c_6\in C_3$ such that $(r_3,c_6)$ contains $2$. 
Next, there exists a row $r_6\in R_3$ such that $(r_6,c_1)$ contains $2$.  
Let $Z$ be a diagonal from $B_{3,3}$ which includes $(r_6,c_6)$.  

Finally, we can construct a balanced diagonal 
by including $X$ (except for cell $(r_1,c_1)$),  $Y$ (except for cell $(r_3,c_3)$), 
$Z$ (except for cell $(r_6,c_6)$ and 
the  
cells $(r_6,c_1)$, $(r_3,c_6)$ and $(r_1,c_3)$.
\end{proof}

We next do the case $\lambda=2$. 

\begin{theorem}
Any frequency square of type $F(6;2,2,2)$ contains a balanced diagonal. 
\label{case6222}
\end{theorem}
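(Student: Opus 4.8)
The plan is to start from the reduction supplied by \lref{nonny}. If there is a symbol $e\in\{1,2,3\}$ for which $L$ contains neither of the two subarrays $\bigl(\begin{smallmatrix} e & e\\ e & f\end{smallmatrix}\bigr)$ (over the two choices of $f\neq e$), then that lemma already hands us a balanced diagonal and we are done. So the only case requiring work is when \emph{every} symbol $e$ occurs as the repeated corner of at least one such L-subarray. The key leverage for $\lambda=2$ is rigidity: because each symbol occurs exactly twice in every row and every column, one such L-subarray simultaneously pins down both occurrences of $e$ in one row and both occurrences of $e$ in one column. I would normalise coordinates so that the L-subarray for the symbol $1$ sits in the top-left corner, say $L_{1,1}=L_{1,2}=L_{2,1}=1$ and $L_{2,2}=2$; then the remaining four entries of row $1$ and of column $1$ are each forced to be two $2$'s and two $3$'s, with analogous constraints flowing from the subarrays guaranteed for the symbols $2$ and $3$. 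Up to the allowed equivalences (row and column permutations, relabelling, transpose) this should leave only a small number of essentially different configurations, classified by how the three symbols' L-subarrays share rows and columns.

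To assemble the diagonal I would use the decomposition $(2,2,2)=(1,1,1)+(1,1,1)$: it suffices to split the six rows into two triples $R',R''$ and the six columns into two triples $C',C''$ so that each of the $3\times3$ subarrays $R'\times C'$ and $R''\times C''$ carries a \emph{rainbow} diagonal (one cell of each symbol). The union of the two rainbow diagonals uses every row and every column once and contains each symbol exactly twice, hence is balanced. It is convenient to organise this search through the equivalent picture of $L$ as a decomposition of $K_{6,6}$ into three $2$-regular bipartite graphs (each a disjoint union of even cycles); a balanced diagonal is then a perfect matching taking exactly two independent edges from each of the three $2$-factors, which I would try to build by extending a small balanced seed. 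The normalised L-subarray for symbol $1$ supplies such a seed, for instance a pair of disjoint cells of distinct symbols, after which I would attempt to complete it inside the complementary subarray.

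I expect the main obstacle to be the \emph{compatibility of the extension}: once part of the diagonal is fixed, the cells that remain available must still admit a completion with precisely the leftover symbol counts, and a careless seed can leave, say, too few copies of a symbol reachable on the remaining rows and columns. For $\lambda=2$ the numbers are small enough that a near-balanced diagonal can always be repaired, either by a direct Hall-type or parity check on the residual $4\times4$ subarray or by alternating along an even cycle of one colour class to exchange one symbol for another, exactly in the spirit of the good-swap arguments of \tref{main2} and \lref{nonny}. The delicate part, and the bulk of the remaining work, is the bookkeeping that verifies such a repair across all the symmetry-reduced configurations — in particular when the L-subarrays of two different symbols overlap in a common row or column, so that the same cells are constrained from two directions and the two rainbow triples cannot be chosen independently.
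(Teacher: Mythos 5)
Your opening move is the same as the paper's: invoke \lref{nonny} to assume, up to equivalence, that $L_{1,1}=L_{1,2}=L_{2,1}=1$ and $L_{2,2}=2$. From there, however, what you have written is a plan rather than a proof, and the gap is exactly the part you yourself flag as ``the bulk of the remaining work.'' Nothing in the proposal establishes that the six rows and six columns can actually be split into triples $R',R''$ and $C',C''$ so that $R'\times C'$ and $R''\times C''$ each carry a rainbow diagonal; that existence claim is equivalent to the theorem itself (every balanced diagonal decomposes this way, and conversely), so the reformulation buys no logical ground. You never enumerate the ``small number of essentially different configurations,'' never exhibit a diagonal in any of them, and never show that the alleged ``Hall-type or parity check'' or cycle-alternation repair succeeds in every case. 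The appeal to repairability is not automatic: \tref{main2} shows that for $m=2$ a near-balanced diagonal can fail to be repairable in an exceptional square, so for $F(6;2,2,2)$ the non-existence of such an obstruction is precisely what must be demonstrated, not assumed.

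For comparison, the paper's proof is a concrete forcing argument: after the normalisation it counts the $3$'s to locate four of them in the block $\{3,4,5,6\}\times\{3,4,5,6\}$, sets $L_{3,3}=L_{4,4}=3$ without loss of generality, rules out the subcase where $\{5,6\}\times\{5,6\}$ has no $3$'s, and then chases forced entries ($L_{3,6}=L_{4,6}=L_{6,3}=L_{6,4}=1$, $L_{6,5}=3$, $L_{5,6}=2$, $L_{6,6}=2$, $L_{1,6}=L_{2,6}=3$, $L_{1,5}=2$, \dots), exhibiting an explicit balanced diagonal at each terminal branch. If you want to salvage your approach, you must either carry out an analogous exhaustive case analysis (your idea of invoking the L-subarrays for all three symbols will multiply, not reduce, the cases, since their rows and columns can overlap in many ways) or supply a genuine structural lemma guaranteeing the two-rainbow splitting. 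As it stands, the proposal does not prove the theorem.
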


\begin{proof}
Let $L$ be a frequency square of type $F(6;2,2,2)$. From Lemma \ref{nonny}, we may assume without loss of generality  
that $L_{1,1}=L_{1,2}=L_{2,1}=1$ and $L_{2,2}=2$. 
Since there are four~$3$'s in rows $1$ and $2$ and another four $3$'s in columns $1$ and $2$, there are four $3$'s in the block $\{3,4,5,6\}\times \{3,4,5,6\}$ and without loss of generality we can assume $L_{3,3}=L_{4,4}=3$. 

Suppose that there are no $3$'s in the block $\{5,6\}\times \{5,6\}$. 
If there exists at least one $2$ in this block, we can find a balanced diagonal. But if there are only $1$'s in this block the resultant partial structure does not complete to a frequency square of type $F(6;2,2,2)$. 

Thus, without loss of generality, $L_{5,5}=3$. If $\{L_{3,6},L_{6,3}\}=\{1,2\}$ or $\{2,2\}$ we can construct a balanced diagonal. 
Similarly, if $\{L_{4,6},L_{6,4}\}$ or $\{L_{5,6},L_{6,5}\}$ is equal to $\{1,2\}$ or $\{2,2\}$  we are done. Therefore, without loss of generality,
we may assume that $L_{3,6}=L_{4,6}=L_{6,3}=L_{6,4}=1$, $L_{6,5}=3$ and $L_{5,6}=2$. 
Hence, there are no more $3$'s in the block $\{3,4,5,6\}\times \{3,4,5,6\}$ and we are forced to have 
$L_{6,6}=2$, $L_{1,6}=L_{2,6}=3$ and $L_{1,5}=2$. 

If $L_{5,4}=2$ there is a balanced diagonal on cells $(1,1),(2,2),(3,3),(4,6),(5,4)$ and $(6,5)$. Similarly we are done if $L_{5,3}=2$. Otherwise $L_{5,4}=L_{5,3}=1$ and in turn, considering where a $1$ can be placed in row $2$, $L_{2,5}=1$.   
If $L_{5,2}=2$, there is a balanced diagonal on cells $(1,1),(2,5),(3,3),(4,4),(5,2)$ and $(6,6)$. Otherwise $L_{5,2}=3$. 
Thus, $L_{5,1}=2$; a balanced diagonal is shown underlined below. 
$$\begin{array}{|c|c|c|c|c|c|}
\hline
1 & \underline{1} & & & 2 & 3 \\
\hline
1 & 2 & & & \underline{1} & 3 \\
\hline
& & \underline{3} &  & & 1 \\
\hline
 & &  & \underline{3} & & 1 \\
\hline
\underline{2} & 3 & 1 & 1 & 3 & 2 \\
\hline
 &  & 1 & 1 & 3 & \underline{2} \\
\hline
\end{array}$$

\end{proof}

\begin{lemma}\label{lem: must contain two subsquares}
Let $\lambda\geq 3$. Suppose that $L$ contains the following subarray:
$$
\begin{array}{|c|c|}
\hline
e & e \\
\hline
e & f \\
\hline
\end{array} \;,
$$
for some $e,f\in\{1,2,3\}$ such that $e\neq f$.

Let $\{g\}=\{1,2,3\}\setminus \{e,f\}$. 
Then either $L$ contains one of the following subarrays (on rows and columns disjoint to the above) or $L$ contains a balanced diagonal.  
$$
\begin{array}{|c|c|}
\hline
g & g \\
\hline
g & e \\
\hline
\end{array}
\quad \quad
\begin{array}{|c|c|}
\hline
g & g \\
\hline
g & f \\
\hline
\end{array}\;.$$
\end{lemma}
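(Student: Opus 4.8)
Normalise so that $e=1$, $f=2$, $g=3$, with the given subarray occupying rows $r_1,r_2$ and columns $c_1,c_2$, and let $L'$ be the $(3\lambda-2)\times(3\lambda-2)$ subarray obtained by deleting these two rows and two columns. The two target subarrays are exactly the two ways of completing three $3$'s in a $2\times2$ by a $1$ or a $2$, so the plan is a clean dichotomy: either $L'$ contains a $2\times2$ with three $3$'s and a fourth entry in $\{1,2\}$ — which is one of the required subarrays, sitting on rows and columns disjoint from the original — or it does not, and I will build a balanced diagonal. Assume the latter. Then, just as in the proof of \lref{nonny}, any two rows of $L'$ sharing a $3$ in a common column must have identical sets of $3$-columns (otherwise the offending $2\times2$ is a target subarray in $L'$); hence the $3$'s of $L'$ split into pairwise row- and column-disjoint all-$3$ rectangles $A_1\times B_1,\dots,A_k\times B_k$.

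Next I would determine the shape of this partition. Every column of $L'$ must lie in some $B_i$, since a column meeting no rectangle would have no $3$ in $L'$ and hence all $\lambda$ of its $3$'s in the two deleted rows, impossible for $\lambda\ge 3$; symmetrically every row lies in some $A_i$. Writing $|A_i|=\lambda-\alpha_i$ and $|B_i|=\lambda-\beta_i$, where $\alpha_i,\beta_i\in\{0,1,2\}$ record the $3$'s that the columns (resp.\ rows) of the rectangle carry in the two deleted rows (resp.\ columns), a double count of the $3$'s of $L'$ gives $\sum_i\alpha_i=\sum_i\beta_i=(k-3)\lambda+2$ and $\sum_i\alpha_i\beta_i=(k-3)\lambda^2$. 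Since $\alpha_i\beta_i\le 4$, this forces $k=3$ once $\lambda\ge5$ (the finitely many configurations with $\lambda\in\{3,4\}$ I would dispatch by hand), and a short case check on the resulting values then exhibits, in the full array $L$, a $\lambda\times\lambda$ all-$3$ subarray $P\times Q$: one block together with the two deleted rows or the two deleted columns always closes up to such a $K_{\lambda,\lambda}$.

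With $P\times Q$ fixed I would place its full diagonal, contributing exactly $\lambda$ threes, and then try to complete it using the complementary $2\lambda\times2\lambda$ region on rows $R\setminus P$ and columns $C\setminus Q$. The point of choosing $P\times Q$ this way is that no remaining row has a $3$ in $Q$ and no remaining column a $3$ in $P$; consequently every row and every column of the complementary region holds exactly $\lambda$ threes and $\lambda$ non-threes, and a count of ones shows the non-three cells carry exactly $\lambda^2$ ones and $\lambda^2$ twos. Thus the non-three cells form a $\lambda$-regular bipartite graph, which by K\"onig's theorem has a perfect matching; any such matching avoids all threes, so together with the diagonal of $P\times Q$ it is a diagonal of $L$ with exactly $\lambda$ threes, and the task reduces to arranging that it also has exactly $\lambda$ ones.

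The main obstacle is precisely this last $1$/$2$ balancing. Decomposing the $\lambda$-regular non-three graph into $\lambda$ perfect matchings shows that the average number of ones over a perfect matching is $\lambda$, so some matching has at least $\lambda$ and some at most $\lambda$; but a single alternating-cycle switch can alter the count by more than one, so reaching $\lambda$ exactly is a genuine parity question rather than a formality. I expect to resolve it by exploiting the residual block structure that the complementary region inherits — it still contains the full all-$3$ blocks $A_j\times B_j$ for the unchosen indices $j$, flanked by $\{1,2\}$-only regions across the blocks — using these to locate an alternating cycle that changes the number of ones by exactly one, in the same spirit as the balancing swaps in the proofs of \tref{main2} and \tref{case6222}. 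Carrying out this correction, together with the by-hand verification of the small-$\lambda$ configurations, completes the construction of a balanced diagonal.
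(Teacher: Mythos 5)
Your opening is sound and matches the paper's own strategy: assuming neither target subarray occurs on rows and columns disjoint from the given one, the $3$'s outside those two rows and columns decompose into disjoint all-$3$ rectangles, and your double-count identities $\sum_i\alpha_i=\sum_i\beta_i=(k-3)\lambda+2$, $\sum_i\alpha_i\beta_i=(k-3)\lambda^2$ are exactly equations \eref{eqq1}--\eref{eqq2} of the paper, forcing $k=3$ up to small exceptions. Your observation that in each surviving configuration some block closes up with the two deleted rows or columns to a full $\lambda\times\lambda$ all-$3$ subarray $P\times Q$ is also correct. The problem is everything after that. Having placed a diagonal of $P\times Q$, you reduce to finding a perfect matching of the $\lambda$-regular bipartite graph of non-$3$ cells in the complementary $2\lambda\times 2\lambda$ region containing exactly $\lambda$ ones, and you concede that you only know the average over a one-factorisation is $\lambda$ and that you ``expect'' to find an alternating cycle changing the count by exactly one. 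That expectation is precisely where the difficulty of the whole problem lives, and it is not a formality: the set of achievable $1$-counts over perfect matchings need not be an interval, and the paper's own Lemma~\ref{tobe} exhibits a $2\lambda\times 2\lambda$ array with two symbols, $\lambda$ per row and column, in which \emph{no} diagonal carries exactly $\lambda$ of each ($A_{2\lambda}$ with $\lambda$ odd -- there every diagonal has an even number of $1$'s). So your final step needs an argument ruling out such parity obstructions using the residual structure (the original $\begin{smallmatrix}1&1\\1&2\end{smallmatrix}$ subarray and the specific block shapes), and none is given.

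It is worth noting that the paper sidesteps this entirely by not insisting on constructing a balanced diagonal in the hard subcase. In the configuration where the $3$'s form three disjoint $\lambda\times\lambda$ blocks globally, it invokes Lemma~\ref{nonny} (whose swapping argument is itself delicate). In the remaining configuration ($a_1=\lambda-2$, $b_2=b_3=\lambda-1$, $a_2=a_3=b_1=\lambda$) it instead produces a \emph{new} pair of row- and column-disjoint $2\times 2$ subarrays of the two required types, which satisfies the lemma's disjunctive conclusion after relabelling which subarray plays the role of the hypothesis. Your proposal commits to the strictly harder branch of the disjunction in every case, and the unproved balancing claim is a genuine gap, not a routine correction; the small-$\lambda$ cases you defer ``to hand'' (the $(\lambda,\alpha)=(3,4)$ configuration in particular requires a real argument, not just a feasibility check) compound this.
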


\begin{proof}
Let $e=1$, $f=2$, $g=3$,  
$L_{1,1}=L_{1,2}=L_{2,1}=1$ and $L_{2,2}=2$. 
We assume that whenever $L_{r,c}=L_{r,c'}=L_{r',c}=3$, where $r,r',c,c'\geq 3$, then 
$L_{r',c'}=3$. 
If we can show that such an $L$ always contains a balanced diagonal, we are done. 

Since $\lambda\geq 3$, we can assume, without loss of generality, that  
the set of rows is $\{1,2\}\cup\bigcup_{i=1}^{\alpha} R_i$ and the set of columns is
 $\{1,2\}\cup \bigcup_{i=1}^{\alpha}C_i$ where 
the cells in $R_i\times C_i$ each contain $3$,  $1\leq i\leq \alpha$. 

Let $|R_i|=a_i$ and $|C_i|=b_i$ where $1\leq i\leq \alpha$. 
Then clearly $a_i,b_i\in\{\lambda,\lambda-1,\lambda-2\}$ for each $1\leq i\leq \alpha$ and 
\begin{eqnarray}
\sum_{i=1}^{\alpha} a_i & = & \sum_{i=1}^{\alpha} b_i=3\lambda-2.
\label{eqq1}
\end{eqnarray} 
(Note that since $\lambda>2$, this implies that $\alpha\geq 3$.)
For each $1\leq i\leq \alpha$, the number of $3$'s in $\{1,2\}\times C_i$ is equal to $(\lambda-a_i)b_i$; thus since there are no $3$'s in the subarray 
$\{1,2\}\times \{1,2\}$,
\begin{eqnarray}
0=2\lambda-\sum_{i=1}^{\alpha} (\lambda-a_i)b_i 
& = & \sum_{i=1}^{\alpha} a_i'b_i'-(\alpha-3)\lambda^2, 
\label{eqq2}
\end{eqnarray}
where $a_i'=\lambda-a_i\in \{0,1,2\}$ and $b_i'=\lambda-b_i\in \{0,1,2\}$, for each $i\in N(\alpha)$. 
Thus $0\leq 4\alpha-(\alpha-3)\lambda^2$. 

Suppose $\alpha>3$. Then $12/(\alpha-3)+4\geq \lambda^2$, so by inspection 
we must have $(\lambda,\alpha)\in \{(3,4),(3,5),(4,4)\}$. 
In the case $(\lambda,\alpha)=(4,4)$,  observe that the system of equations:
$$\sum_{i=1}^4 a_i'=\sum_{i=1}^4 b_i'=6,\quad \sum_{i=1}^4 a_i'b_i'=16$$
under the constraint $a_i' ,b_i' \in \{0,1,2\}$ for $1 \leq i \leq \alpha $ has no solution. 
By a similar observation, the case when $(\lambda,\alpha)=(3,5)$ 
is also impossible and, 
 without loss of generality,  the only possibility is when $(\lambda,\alpha)=(3,4)$ and  $a_1=a_2=b_1=b_2=1$, $a_3=b_3=2$ and $a_4=b_4=3$:
$$
\begin{array}{|c|c|c|c|c|c|c|c|c|}
\hline
1 & 1 & & & & & & &\\
\hline
1 & 2 & & & & & & &\\
\hline
 &  & 3 &  & & & & &\\
\hline
 &  &  & 3 & & & & &\\
\hline
 &  &  &  & 3& 3 & & &\\
\hline
 &  &  &  & 3& 3 & & &\\
\hline
 &  &  &  & & & 3& 3& 3\\
\hline
 &  &  &  & & & 3& 3& 3\\
\hline
 &  &  &  & & & 3& 3& 3\\
\hline
\end{array}\;,
$$
with no other $3$'s outside rows $1,2$ and columns $1,2$. 
 Then clearly the entries in $\{1,2\}\times\{3,4\}$ and $\{3,4\}\times\{1,2\}$ are each $3$.
Observe that column $9$ has no $3$'s in rows $1$ and $2$.  
If $L_{1,9}=L_{2,9}$, then we are done by considering cells $\{1,2\}\times \{2,9\}$ and cells $\{3,4\}\times \{1,4\}$. Otherwise $L_{1,9}\neq L_{2,9}$; again we are done by considering cells $\{1,2\}\times \{1,9\}$ and cells 
$\{3,4\}\times \{2,3\}$. 

Thus $\alpha=3$. Then, as above,
$$\sum_{i=1}^3 a_i'=\sum_{i=1}^3 b_i'=2, \quad \sum_{i=1}^3 a_i'b_i'=0.$$ 
If 
$\{a_i',a_2',a_3'\}=\{b_1',b_2',b_3'\}=\{0,0,2\}$, then 
$\{a_1,a_2,a_3\}=\{b_1,b_2,b_3\}=\{\lambda,\lambda,\lambda-2\}$ and without loss of generality $a_1=a_2=b_1=b_3=\lambda$ and $a_3=b_2=\lambda-2$.  
It follows that whenever~$3$ is in cells $(r,c)$, $(r,c')$ and $(r',c)$, it is also in cell $(r',c')$. So a balanced diagonal exists  
  by Lemma \ref{nonny}. 
Thus, without loss of generality, we may otherwise assume that 
$a_1=\lambda-2$, $b_2=b_3=\lambda-1$ and $a_2=a_3=b_1=\lambda$. 
Clearly $\{1,2\}\times C_1$ contains only $3$. It follows there exists a column $c'\in C_2\cup C_3$ such that 
$(2,c')$ contains $1$ and $(1,c')$ contains $2$. Thus we have a subarray of the form 
$$\begin{array}{|c|c|}
\hline
1 & 2 \\
\hline
1 & 1 \\
\hline
\end{array}$$
within rows $1$ and $2$, using column $1$ and one column from $C_2\cup C_3$. 

Next, consider $R_2\times \{1,2\}$. If all the $3$'s are in one column in this subarray, then, similarly to before, we are done by Lemma \ref{nonny}. 
Otherwise, there exists 
 a subarray of the form 
$$\begin{array}{|c|c|}
\hline
e & 3 \\
\hline
3 & 3 \\
\hline
\end{array}$$
where $e\in \{1,2\}$, the rows are in $R_2$, column $2$ and one column from $C_2$ can be used. Since $\lambda\geq 3$, this can be made to avoid 
the previous subarray, and 
we are done. 
\end{proof}

\begin{lemma}
Let $\lambda\geq 3$. If $L$ does not contain a balanced diagonal, then $L$ contains a $4\times 4$ subarray with (possibly non-disjoint) diagonals $D_1,D_2,D_3$ and $D_4$, where:
\begin{itemize}
\item $D_1$ contains symbol $1$ twice and each other symbol once;
\item $D_2$ contains symbol $2$ twice and each other symbol once;
\item $D_3$ contains symbol $3$ twice and each other symbol once; and 
\item $D_4$ contains two symbols twice each, with the remaining symbol not included. 
\end{itemize}
\label{setup}
\end{lemma}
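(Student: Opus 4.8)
The plan is to reduce the statement to finding two disjoint $2\times 2$ blocks that share a minority symbol. Suppose $L$ contains, on disjoint sets of rows and columns, one $2\times 2$ subarray consisting of three copies of $a$ and a single $c$, and another consisting of three copies of $b$ and a single $c$, where $\{a,b,c\}=\{1,2,3\}$. I would take the $4\times 4$ subarray $S$ having these two blocks as its two diagonal blocks. Each $2\times 2$ block has exactly two diagonals: in the first block they carry the multisets $\{a,c\}$ and $\{a,a\}$, and in the second $\{b,c\}$ and $\{b,b\}$. A diagonal of $S$ that stays inside the two blocks is obtained by choosing one diagonal from each block, and the four such choices produce the symbol multisets $\{a,a,b,c\}$, $\{a,c,b,b\}$, $\{a,c,b,c\}$ and $\{a,a,b,b\}$. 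These double $a$, double $b$, double $c$, and omit $c$ (while doubling $a$ and $b$), so after matching $a,b,c$ with $1,2,3$ they are precisely $D_1,D_2,D_3$ and $D_4$. Hence it is enough to exhibit such a pair of blocks.

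To exhibit them I would argue by contradiction, assuming $L$ has no balanced diagonal. By the contrapositive of \lref{nonny}, $L$ contains a $2\times 2$ block with three copies of one symbol and a single copy of another; relabel the symbols so that this block has majority $1$ and minority $2$. Applying \lref{lem: must contain two subsquares} to it produces, on disjoint rows and columns, a block of majority $3$ whose minority is either $2$ or $1$. If the minority is $2$, then I have two disjoint blocks with common minority $2$ and distinct majorities $1$ and $3$, and the construction of the first paragraph finishes the proof.

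The difficulty is the opposite branch, in which \lref{lem: must contain two subsquares} always returns the ``wrong'' minority. Writing a block as an ordered pair (majority, minority), feeding the block $(3,1)$ back into \lref{lem: must contain two subsquares} yields a disjoint block $(2,1)$ — which finishes, the common minority now being $1$ — or $(2,3)$; the block $(2,3)$ in turn yields $(1,3)$, which finishes, or $(1,2)$, and so on. Each iteration either terminates with a disjoint pair sharing a minority, or perpetuates the cyclic pattern in which the only blocks ever seen are the three ``forward'' types $(1,2)$, $(2,3)$, $(3,1)$, i.e.\ every block with majority $e$ has minority $\sigma(e)$ for the $3$-cycle $\sigma=(1\,2\,3)$. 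I expect ruling out this persistent cyclic configuration to be the main obstacle: the natural route is to show that the total absence of the three ``backward'' block types $(2,1),(3,2),(1,3)$, for a genuine $F(3\lambda;\lambda,\lambda,\lambda)$ square with $\lambda\ge 3$, is either outright impossible or itself forces a balanced diagonal, via a direct count of how many blocks of each type the row and column frequencies compel. Once the cyclic case is excluded, the iteration terminates with the required disjoint same-minority pair, and the first paragraph completes the proof.
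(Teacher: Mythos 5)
Your first paragraph is sound and reproduces the paper's ``Case 1'': two disjoint $2\times 2$ blocks with distinct majority symbols and a common minority symbol do yield all four diagonals $D_1,\dots ,D_4$, and \lref{nonny} together with \lref{lem: must contain two subsquares} does hand you either such a pair or a ``chained'' pair of blocks, i.e.\ $(1,2)$ together with a disjoint $(3,1)$ (equivalently, after relabelling, $(1,2)$ together with $(2,3)$). The genuine gap is that you stop exactly where the real work begins. Your iteration does not terminate on its own: each application of \lref{lem: must contain two subsquares} only guarantees a block disjoint from the \emph{single} block you fed in, so an unending cyclic sequence $(1,2)\to(3,1)\to(2,3)\to(1,2)\to\cdots$ is fully consistent with the lemma, and all you would know in that event is the local statement ``no backward block disjoint from each particular block in the sequence'' --- not the global absence of the backward types $(2,1),(3,2),(1,3)$ that your hoped-for counting argument would require. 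You give no argument for that counting step, and the paper's proof indicates that this is not the right route: it never attempts to rule the chained configuration out.

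What the paper actually does in the chained case is fix the $4\times4$ subarray built from the pair $(1,2)$ and $(2,3)$ on rows and columns $1,2,3,4$ and analyse rows and columns $3,4$ against the remaining rows $R'$ and columns $C'$: either a diagonal of type $D_3$ already sits inside this $4\times4$ subarray (which suffices, since $D_1$, $D_2$ and $D_4$ are already visible there), or a frequency count forces a column or row meeting the $(2,3)$-block in the entry set $\{1,3\}$, from which one reduces to one of two target configurations. Crucially, the second target configuration (the paper's ``Case 2'') uses $2\times2$ blocks of a different shape, equivalent to $\begin{array}{|c|c|}\hline 1&2\\\hline 1&3\\\hline\end{array}$ paired with $\begin{array}{|c|c|}\hline 2&3\\\hline 2&1\\\hline\end{array}$ on disjoint rows and columns; these are not ``three of one symbol plus a minority,'' so they lie outside the only mechanism your proposal provides for manufacturing $D_1,\dots,D_4$. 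Without this second configuration and without the ensuing row/column analysis, the chained case --- which you correctly identify as the main obstacle --- remains unproved, so the proposal is incomplete at its central step.
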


\begin{proof}
If $L$ contains row- and column- disjoint subarrays equivalent to:
$$
\begin{array}{|c|c|}
\hline
1 & 1 \\
\hline
1 & 2 \\
\hline
\end{array}
\quad \quad
\begin{array}{|c|c|}
\hline
3 & 3 \\
\hline
3 & 2 \\
\hline
\end{array}$$
observe that we are done. 
Moreover, if 
 $L$ contains row and column disjoint subarrays equivalent to:
$$
\begin{array}{|c|c|}
\hline
1 & 2 \\
\hline
1 & 3 \\
\hline
\end{array}
\quad \quad
\begin{array}{|c|c|}
\hline
2 & 3 \\
\hline
2 & 1 \\
\hline
\end{array}$$
observe that we are done. We call these Cases 1 and 2, respectively and refer to them later in the proof. 
In Case 2, the transpose of either $2\times 2$ subarray is also allowed. 

From Lemmas \ref{lem: must contain subsquare} and \ref{lem: must contain two subsquares}, 
we are left, without loss of generality, with the case when 
there is a substructure as follows:
$$
\begin{array}{|c|c|c|c|}
\hline
1 & 1 & & \\
\hline
1 & 2 & & \\
\hline
 &  & 2 & 2 \\
\hline
 &  & 2 & 3 \\
\hline
\end{array}\;, 
$$
on rows $1,2,3$ and $4$ and columns $1,2,3$ and $4$. 
Note that if a diagonal of the form $D_3$ exists in this subarray we are done.  

Let the remaining sets of rows and columns be $R'$ and $C'$ respectively. 
If there is a column $c\in C'$ such that $(3,c)$ and $(4,c)$ each contain $3$, we 
obtain Case 1 by swapping columns $3$ and $c$ and we are done. 
Similarly, if there is a row $r\in R'$ such that $(3,r)$ and $(4,r)$ each contain $3$, we are done. 

{\bf Claim:}\ {\em Either} there exists a column 
 $c'\in C'$ such that the set of entries in $(3,c')$ and $(4,c')$ is $\{1,3\}$; {\em or} 
there exists a row $r'\in R'$ such that the set of entries in $(r',3)$ and $(r',4)$ is $\{1,3\}$. 
If not, for each occurrence of entry $3$ in a cell $(3,c')$ where $c'\in C'$, entry $2$ must occur in $(4,c')$. 
Also, 
 for each occurrence of entry $3$ in a cell $(4,c')$ where $c'\in C'$, entry $2$ must occur in $(3,c')$. 
Since cells $(3,3)$, $(3,4)$ and $(4,3)$ each contain $2$, 
there are at most $\lambda-1$ columns in $C'$ containing $3$ in row $3$ and 
at most $\lambda-2$ columns in $C'$ containing $3$ in row $4$. 
Thus, there exists at least one 3 in cells $(3,1)$ or $(3,2)$ and at least one 3 in cells 
$(4,1)$ or $(4,2)$. 
By symmetry, there 
 exists at least one 3 in cells $(1,3)$ or $(2,3)$ and at least one three in cells 
$(1,4)$ or $(2,4)$. 

If $3$ is in cell $(4,2)$, then $D_3$ exists either on the set of cells $\{(1,1),(2,3),(3,4),(4,2)\}$ or on the set of cells $\{(1,3),(2,1),(3,4),(4,2)\}$. Thus, $3$ is not in cell 
$(4,2)$ and $3$ is in cell $(4,1)$. Similarly, $3$ is in cell $(1,4)$ (and not in cell $(2,4)$). If $3$ is in cell $(3,2)$ then $D_3$ exists on cells 
$\{(1,4),(2,1),(3,2),(4,3)\}$. Thus, $3$ is in cell $(3,1)$ (and not in cell $(3,2)$). 

By further inspection, the only possible scenario that does not allow diagonal $D_3$ to exist in 
the first four rows and columns is:
$$
\begin{array}{|c|c|c|c|}
\hline
1 & 1 & 3 & 3 \\
\hline
1 & 2 & 1 & 1 \\
\hline
3  & 1 & 2 & 2 \\
\hline
3 & 1 & 2 & 3 \\
\hline
\end{array} \;. 
$$
But then we are done (as in Case $1$), considering the entries in cells 
$\{1,2\}\times \{1,4\}$ and 
$\{3,4\}\times \{3,c'\}$, since there exists $c'$ such that $(3,c')$ and $(4,c')$ contain entries $3$ and $2$, respectively.  

Hence, our claim is true. Without loss of generality, 
 there exists a column 
 $c'\in C'$ such that 
the set of entries in $(3,c')$ and $(4,c')$ is $\{1,3\}$.
If there exists a column $c''\in C'$ such that 
$c''\neq c'$ and 
cells $(1,c'')$ and $(2,c'')$ each contain $3$ we are done,  
 since we have Case 2 on 
cells $\{1,2\}\times \{2,c''\}$ and 
cells $\{3,4\}\times \{3,c'\}$. 
If there exists a column $c''\in C'$ such that 
$c''\neq c'$ and 
the set of entries in $(1,c'')$ and $(2,c'')$ is $\{2,3\}$ we are also done,
 since we have Case 2 on 
cells
$\{1,2\}\times \{1,c''\}$ 
and 
cells $\{3,4\}\times \{3,c'\}$.

Since cells $(1,1)$ and $(1,2)$ each contain $1$, 
there are at most $\lambda-2$ columns in $C'\setminus \{c'\}$ containing $3$ in row $2$.   
It follows that $3$ either exists in cell $(2,3)$ or cell $(2,4)$. 

If there is a $3$ in cell $(4,2)$, then we can find diagonal $D_3$ and we are done. 
Therefore, $3$ occurs as an entry at least $\lambda-1> 1$ times in column $c_2$ (within rows $R'$). 

Let $r'\in R'$ be such that $(r',2)$ contains $3$. Suppose that 
$(r',1)$ contains $1$. Then we are back to Case 1 on cells 
$\{1,r'\}\times \{1,2\}$ and 
$\{3,4\}\times \{3,4\}$.  
If $(r',1)$ contains $2$, we are back to Case 2
on cells 
$\{1,r'\}\times \{1,2\}$ and 
$\{3,4\}\times \{3,c'\}$.  
Thus, $(r',1)$ contains $3$. 
But then we have Case 2 on cells 
$\{2,r'\}\times \{1,2\}$ and 
$\{3,4\}\times \{3,c'\}$. 
\end{proof}

In our proof of Theorem \ref{main3}, we also need the following lemma. 
\begin{lemma}
Let $L$ be a frequency square of type $F(3\lambda;\lambda,\lambda,\lambda)$ 
and suppose that there is a $k\times k$ subarray missing a particular entry. Then $k\leq 3\lambda/2$.  
\label{halfy}
\end{lemma}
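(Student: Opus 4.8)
The claim is a counting/density statement: if a $k\times k$ subarray of $L$ avoids some entry, say entry $3$, then $k\le 3\lambda/2$. My plan is to bound $k$ by comparing the number of $3$'s that the frequency-square structure forces into the rows (or columns) spanned by the subarray against the space available outside the subarray. Suppose the subarray occupies row set $R$ and column set $C$ with $|R|=|C|=k$, and contains no $3$. Every row in $R$ contains exactly $\lambda$ copies of $3$ (since $L$ is of type $F(3\lambda;\lambda,\lambda,\lambda)$), and none of these lie in the columns of $C$; hence all $k\lambda$ of these $3$'s lie in the $k\times(3\lambda-k)$ block $R\times(N(3\lambda)\setminus C)$.

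The key step is the capacity bound on that block. The block has $3\lambda-k$ columns, and each column of $L$ contains only $\lambda$ copies of $3$ in total, so in particular at most $\lambda$ copies of $3$ can appear in each column of the block. Therefore the block holds at most $(3\lambda-k)\lambda$ copies of $3$. Combining the two counts gives
\begin{equation}
k\lambda \;\le\; (3\lambda-k)\lambda,
\label{halfybound}
\end{equation}
and dividing by $\lambda>0$ yields $k\le 3\lambda-k$, i.e. $2k\le 3\lambda$, which is exactly $k\le 3\lambda/2$.

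The argument is symmetric in rows and columns, so the same bound follows by counting along columns instead; either direction suffices. I expect no real obstacle here — the only point requiring a moment's care is making sure the per-column capacity $\lambda$ is applied to the \emph{whole} column of $L$ (of which the block sees only a $k$-cell portion), so that the count is a genuine upper bound rather than an equality, and that we are free to name the missing entry as $3$ without loss of generality by relabelling symbols. Once \eref{halfybound} is in place the conclusion is immediate.
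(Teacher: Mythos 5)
Your proof is correct and uses essentially the same double-counting argument as the paper: the paper forces all $k\lambda$ copies of the missing symbol from the columns of $C$ into $R'\times C$ and concludes that $(3\lambda-2k)\lambda\geq 0$, while you force them from the rows of $R$ into $R\times C'$ and apply the per-column capacity bound --- the same count up to transposition. No issues.
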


\begin{proof}
Let $R$ and $C$ be subsets of the rows and columns, respectively, such that $|R|=|C|=k$ and 
the cells $R\times C$ are missing entry $1$ (say). Let $R'=N(3\lambda)\setminus R$ and 
$C'=N(3\lambda)\setminus C$. 
Then there are $k\lambda$ $1$'s in $R'\times C$ and in turn, $(3\lambda-2k)\lambda$ $1$'s in $R'\times C'$. Thus $2k\leq 3\lambda$.  
\end{proof}

Finally, we are ready to prove Theorem \ref{main3}:

\begin{proof}
The case $\lambda=1$ is trivial and the case $\lambda=2$ is done by Theorem \ref{case6222}.  
Otherwise suppose, for the sake of contradiction, that there exists a frequency square $L$ of type $F(3\lambda;\lambda,\lambda,\lambda)$ with no balanced diagonal, where $\lambda\geq 3$. 
From Lemma \ref{setup}, there is a $4\times 4$ subarray in~$L$ as described in the statement of that lemma. 
Let this subarray be on the set of rows $R_0=\{1,2,3,4\}$ and the set of columns $C_0=\{1,2,3,4\}$, with $R'$ and $C'$ the remaining sets of rows and columns. Observe that each entry occurs in at least two cells of $R_0\times C_0$. 

At any stage in the following, $M$ is the main diagonal on the subarray $R'\times C'$ and
$\alpha$, $\beta$ and $\gamma$ are the number of $1$'s, $2$'s and $3$'s, respectively, within $M$. 
Thus $\alpha+\beta+\gamma = 3\lambda-4$. Let $\Delta=\gamma-\alpha$.    
Let $R_1$ and $C_1$ be the sets of rows and columns, respectively, which contain $1$ on the main diagonal (within $M$), with $R_2,C_2,R_3$ and $C_3$ similarly defined. 
For $0\leq i,j\leq 3$ let $B_{i,j}$ be the block of cells $R_i\times C_j$.   
At certain steps we will perform a permutation of rows in $R'$ or columns of $C'$, changing the entries in $M$; 
we recalculate $\alpha$, $\beta$, $\gamma$, $\Delta$ (and in turn $R_1,R_2,R_3,C_1,C_2$ and $C_3$) accordingly, assuming always (by relabelling if necessary) that 
$\alpha\leq \beta\leq \gamma$. 

If $\Delta =1$ then $\alpha=\lambda-2$, $\beta=\gamma=\lambda-1$ and we are done by Lemma \ref{setup}. 
Otherwise $\Delta\geq 2$.  
We call a permutation of $R'$ or $C'$ {\em good} if it decreases the value of $\Delta$
{\em or} creates a balanced diagonal on $M$ together with $D_4$ (in the case $\Delta=2$). 
We are done if we can show that there always exists a good permutation, so we assume no such permutation exists for the sake of contradiction.  

\vspace{5mm}

\noindent {\bf Case 1}: $\alpha+3\leq \beta$. \
If there is a $1$ in 
cell $(r,c)$ in block $B_{2,3}$, then the permutation $(rc)$ (applied to the rows) is good. 
Similarly, there is no $1$ in block $B_{3,2}$. 
If there is a $1$ in 
cell $(r,c)$ in block $B_{3,3}$, then the permutation $(rc)$ (applied to the rows) is good, unless $\beta=\gamma$ and cell $(c,r)$ contains $2$.
In this latter case, 
if there is a $1$ in cell $(r',c')$ of block $B_{2,2}$, then 
the permutation $(r'c')(rc)$ applied to the rows is always good.
If there are no $1$'s in block $B_{2,2}$, 
then we globally swap entry $2$ with entry $3$ within $L$, noting that $\beta=\gamma$ still holds. 
In any case we may assume that $1$ is neither in block  
$B_{2,3}$ nor block $B_{3,3}$. 
 
Thus, the $1$'s in the columns of $C_3$ 
are in rows $R_0\cup R_{1}$. 
Therefore, $\alpha\geq \lambda-4$. 
If $\alpha=\lambda-4$,  every entry in block $B_{0,3}$ must be $1$; hence $\gamma\leq \lambda$.
If $\gamma <\lambda$, $\alpha+\beta+\gamma<3\lambda-4$, a contradiction. Hence, $\gamma=\lambda$. However, then there are no $1$'s in $B_{0,0}$, a contradiction.  Thus, $\alpha\geq \lambda-3$. But then $\beta,\gamma \geq \lambda$, which gives $\alpha+\beta+\gamma>3\lambda-4$, again a contradiction. 

\vspace{5mm}

\noindent {\bf Case 2}: $\alpha+2=\beta$. \
Note that $\gamma>\beta$; otherwise $\alpha+\beta+\gamma\not\equiv 2$ (mod $3$). 
Therefore, if there is a $1$ in block $B_{3,3}$ or $B_{2,3}$, a good permutation exists, similarly to the previous case.  
Furthermore, the 1's in the columns $C_3$ are in rows $R_0\cup R_1$.
Thus, $\alpha\geq \lambda-4$. If $\alpha=\lambda-4$, 
then the first row must contain $\gamma=\lambda+2>\lambda$ $1$'s. If $\alpha\geq \lambda-2$, 
then $\alpha+\beta+\gamma > 3\lambda-4$. Hence $\alpha=\lambda-3$, $\beta=\lambda-1$ and $\gamma=\lambda$. 

For each $1$ in a cell $(r,c)$ of block $B_{1,3}$, if there is also a $1$ in cell $(c,r)$ (of block $B_{3,1}$), the permutation $(rc)$ applied to the rows is good. A similar result holds for each $1$ in block $B_{3,1}$. Therefore, without loss of generality, we may assume there are at most $\lambda(\lambda-3)/2$ $1$'s in block $B_{1,3}$.  
 But the number of $1$'s in $B_{0,3}$ is at most $4\lambda-2$  
(since $B_{0,0}$ has at least two $1$'s). But the total number of $1$'s in columns $C_3$ is $\lambda^2$, so $(4\lambda-2)+\lambda(\lambda-3)/2\geq \lambda^2$. It follows that $\lambda<5$. 

Suppose that $\lambda=4$. Without loss of generality, suppose there are at most two $1$'s in block $B_{1,3}$. Thus, there are at least $14$ $1$'s in $B_{0,3}$. But there are already at least two $1$'s in $R_0\times C_0$. We thus have that 
$B_{0,2}$ has no $1$'s. But $B_{3,2}$ also has no $1$'s, 
 so each cell in $B_{1,2} \cup B_{2,2}$ contains 1, which is impossible
 as the main diagonal contains 2 within $B_{2,2}$. 

Otherwise $\lambda=3$. Then $R_1$ and $C_1$ are empty and $B_{0,3}$ contains nine $1$'s. Since $R_0\times C_0$ contains at least two $1$'s,  
$B_{0,2}$ contains at most one $1$. But there are no $1$'s in $B_{3,2}$ and at most two $1$'s in $B_{2,2}$ (as this block has $4$ cells), 
contradicting the fact that each column of~$C_2$ must have three $1$'s. 

\vspace{5mm}

\noindent {\bf Case 3}: $\alpha=\beta$ or $\alpha+1=\beta$ and $\alpha\leq \lambda-3$. \ 
Since $\alpha+\beta+\gamma=3\lambda-4$,  observe that $\gamma\geq \beta+3$. 
Since no good permutations exist, there are only $3$'s in the block $B_{3,3}$. But then $\gamma\leq \lambda$ and $\alpha+\beta+\gamma\leq 3\lambda-5$, a contradiction. 

\vspace{5mm}
\noindent {\bf Case 4}: $\alpha=\beta=\lambda-2$ and $\gamma=\lambda$. 
Recall the existence of the diagonal $D_4$ from the statement of Lemma \ref{setup}. If $D_4$ contains no $3$'s there exists a balanced diagonal. Hence, $D_4$ contains entry $3$ twice. 

First suppose there are only $3$'s in the block $B_{3,3}$. Then, to avoid a good permutation, there are only $1$'s in 
blocks $B_{2,3}$ and $B_{3,2}$ and only $2$'s in blocks $B_{1,3}$ and $B_{3,1}$. 
Since $\lambda\geq 3$, $R_1$, $R_2$, $C_1$ and $C_2$ are each non-empty and $|R_3|=|C_3|\geq 3$. 
Let $r_1\in R_1$, $r_2\in R_2$ and $r_3,r_4$ distinct rows from $R_3$.  
Then the permutation $(r_1r_3)(r_2r_4)$ (applied to the rows) is good, a contradiction.  

Thus, without loss of generality, there is a cell $(r,c)$ in $B_{3,3}$ containing $2$; the cell $(c,r)$ must also contain $2$ otherwise $(rc)$ is a good permutation. If~$D_4$ is missing $2$ we are done; therefore~$D_4$ has entry  $2$ twice and entry $3$ twice. 
If there is also a cell $(r,c)$ in $B_{3,3}$ containing~$1$, then again cell $(c,r)$ must contain $1$. However, we are now done by the existence of the diagonal~$D_4$. Therefore, $B_{3,3}$ does not contain entry $1$. 

Next, suppose there is a row $r_1$ of $B_{3,3}$ with no $2$'s. Then column $r_1$ contains no $2$'s in~$B_{3,3}$. There exists a cell $(r_{2},c_{3})$ of $B_{3,3}$ containing $2$. 
Then the permutation $(r_1r_{2}c_{3})$ is good when applied to the columns. Hence, {\em every} row and column of $B_{3,3}$ contains a $2$. 

Next, suppose that there is a $1$ in cell $(r_1,c_2)$ of $B_{2,2}$. 
Let $(r_3,c_4)$ be a cell of $B_{3,3}$ containing~$2$; from above, $(c_4,r_3)$ also contains $2$.  
Then the permutation $(r_1c_2)(r_3c_4)$ is always good, using $D_4$ in the case when $(c_2,r_1)$ contains $1$.  
Thus, there are no $1$'s in $B_{2,2}$.

Suppose that there is a $1$ in $B_{2,3}$ in cell $(r_{1},c_{2})$, say. 
Then there exists $r_{3}\in R_3$ such that $(r_{3},c_{2})$ contains $2$. 
Observe that cells $(r_{1},r_{1})$ and $(c_{2},r_{3})$ each contain $2$ and 
cells $(c_{2},c_{2})$ and $(r_{3},r_{3})$ each contain $3$. 
However regardless of whether cell $(r_{3},r_{1})$ contains a $1$, $2$ or a $3$, the permutation $(r_{1}c_{2}r_{3})$ (applied to the columns) is always good (possibly using $D_4$). 

Thus, there are no $1$'s in $B_{2,3}$; similarly there are no $1$'s in $B_{3,2}$. 
Therefore, by Lemma \ref{halfy}, $\lambda+(\lambda-2)\leq 3\lambda/2$ and $\lambda\leq 4$. 
If $\lambda=4$ then there are $24$ $1$'s in $B_{1,2}\cup B_{1,3}\cup B_{0,2}\cup B_{0,3}$ and thus there are no $1$'s in $B_{0,0}$, a contradiction. 

So we are left with the case $\lambda=3$. 
 Observe there are at least two $1$'s in $B_{0,2}$ (since $B_{2,2}\cup B_{3,2}$ contains no $1$'s and 
$|R_1|=1=|C_2|$). Hence, there are at most eight $1$'s in $B_{0,3}$ and at least one $1$ in $B_{3,1}$.
Similarly, there is at least one $1$ in $B_{1,3}$. 
If there is a $1$ in cell $(r,c)$ of $B_{1,3}$ and in cell $(c,r)$ of $B_{3,1}$, the permutation $(rc)$ applied to the rows is good. 
Thus, there exist rows $r_1\in R_1$ and $r_2,r_3\in R_3$ such that $(r_1,r_2)$ and $(r_3,r_1)$ each contain~$1$ and $r_2\neq r_3$. Observe that $(r_1,r_1)$ contains $1$, $(r_2,r_2)$ and $(r_3,r_3)$ each contain~$3$ and $(r_2,r_3)$ contains either~$2$ or $3$. Therefore, the permutation $(r_1r_2r_3)$ (applied to the columns) is good. 
\end{proof}

\section{Frequency squares with $m\geq 4$}

We conjecture the following. 

\begin{conjecture}
Let $L$ be a frequency square of type $F(m\lambda;\lambda^m)$. 
If $(m-1)\lambda$ is even,  
$L$ contains a balanced diagonal. 
\label{bigandbold}
\end{conjecture}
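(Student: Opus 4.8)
\medskip
\noindent\textbf{A proposed line of attack.}
Before attempting sufficiency I would first record \emph{why} the parity hypothesis is forced, since the obstruction both explains the condition and suggests the invariant to track. Consider the \emph{blow-up} square $B$ obtained from the addition table of $\mathbb{Z}_m=\{0,1,\dots,m-1\}$ by replacing each cell $(a,b)$ with a $\lambda\times\lambda$ block whose every entry is $a+b\pmod m$; this is a frequency square of type $F(m\lambda;\lambda^m)$ generalising $A_{2\lambda}$. Partition its rows and columns into super-rows $R_0,\dots,R_{m-1}$ and super-columns $C_0,\dots,C_{m-1}$ of size $\lambda$, and for a row $i$ (column $j$) let $\bar a(i)$ ($\bar b(j)$) denote its super-index. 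A diagonal is a permutation $\pi$, and its entry in row $i$ is $\bar a(i)+\bar b(\pi(i))-m\,c_i$ with $c_i\in\{0,1\}$ recording a carry. Summing over all rows, and using that $\pi$ meets each super-row and each super-column exactly $\lambda$ times,
\[
\sum_i \bigl(\bar a(i)+\bar b(\pi(i))-m\,c_i\bigr)=\lambda m(m-1)-m\textstyle\sum_i c_i .
\]
A balanced diagonal would make this sum equal to $\lambda\binom{m}{2}=\lambda m(m-1)/2$, forcing $\sum_i c_i=(m-1)\lambda/2$. Hence $B$ admits a balanced diagonal only when $(m-1)\lambda$ is even, so the hypothesis cannot be dropped; note that the case $\lambda=1$, $m$ odd is exactly Ryser's conjecture, while $\lambda=1$, $m$ even recovers the absence of transversals in the addition table of $\mathbb{Z}_m$.

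For sufficiency I would follow the architecture of Sections 2 and 3: isolate a small \emph{seed} subarray, extend an arbitrary diagonal of the complementary rows and columns to one that is nearly balanced, and then remove the residual imbalance by local swaps that preserve the seed. Concretely, I would seek an analogue of \lref{setup}: a bounded $k\times k$ seed carrying diagonals $D_1,\dots,D_m$, where $D_s$ over-represents symbol $s$ by one at the expense of a fixed symbol, together with a correcting diagonal able to absorb a bounded terminal imbalance. For the bulk reduction, letting $(\alpha_1,\dots,\alpha_m)$ be the symbol counts on the current main diagonal $M$ of $R'\times C'$, I would use the potential $\Phi=\sum_s|\alpha_s-\lambda|$ (the higher-dimensional analogue of $\Delta$ in the proof of \tref{main3}) and show that whenever $\Phi$ is large a transposition or short cycle on the rows strictly decreases it. As in \lref{nonny}, the engine is line-regularity: a surplus of some symbol in one block $B_{i,j}$ forces a matching deficit in a partner block, producing the cells needed for a decreasing swap, while \lref{halfy} rules out the extremal configurations in which one symbol is absent from a large subarray.

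The decisive obstacle is that the conjecture contains Ryser's conjecture as the case $\lambda=1$, $m$ odd, so no unconditional proof can be expected without settling that problem; any realistic argument must therefore either assume $\lambda\ge 2$ or treat the Latin-square case as a black box. Even for $\lambda\ge 2$ the endgame is where the difficulty concentrates: the potential $\Phi$ decreases readily while it is large, but controlling the bounded terminal imbalance and finishing with the seed diagonals is precisely the step that, already at $m=3$, forced the four-case analysis of \tref{main3} and the delicate configuration of \lref{setup}. For general $m$ both the number of essentially distinct terminal patterns and the web of block-balance identities among the $B_{i,j}$ grow quickly, and a clean induction on $m$ is not obviously available, since deleting $\lambda$ rows and $\lambda$ columns destroys regularity; I would expect the correct formulation of the seed lemma and, in its absence, a proof of the large-$\lambda$ regime $\lambda\ge\lambda_0(m)$ (where the counting slack alone guarantees a decreasing swap) or of the small cases $m=4,5$, to be the realistic next steps.
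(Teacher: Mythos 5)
The statement you were asked to prove is \cref{bigandbold}, which the paper itself leaves as an open conjecture: it is proved only for $m\le 3$ (Sections 2 and 3), and Section 4 offers supporting evidence rather than a proof. So there is no proof in the paper to compare yours against, and you correctly do not claim to have one. Your necessity argument is sound and is essentially identical to the paper's own: your carry-counting over the blow-up of the addition table of $\mathbb{Z}_m$ is the same computation the paper performs with the function $\Delta(r,c)=\lceil r/\lambda\rceil+\lceil c/\lambda\rceil-e$ in its theorem that $B_n(\lambda)$ has a balanced diagonal if and only if $(n-1)\lambda$ is even (the ``Delta lemma'' style argument), and your observation that the $\lambda=1$, $m$ odd case is exactly Ryser's conjecture matches the paper's remark following the conjecture. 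Your proposed sufficiency strategy --- a seed subarray in the spirit of \lref{setup}, a near-balanced diagonal on the complementary $R'\times C'$, and a potential function $\Phi=\sum_s|\alpha_s-\lambda|$ generalising $\Delta$, driven down by transpositions and short cycles --- is a faithful extrapolation of the method of \tref{main2} and \tref{main3}, and your diagnosis of where it breaks down (the endgame when $\Phi$ is small, the combinatorial explosion of terminal configurations, and the unavoidable dependence on Ryser's conjecture at $\lambda=1$) is accurate. In short: nothing here is wrong, the necessity half is correct and coincides with the paper's, but the sufficiency half is a research programme rather than a proof, which is the honest state of affairs for this statement; your suggested fallbacks (large $\lambda$ relative to $m$, or the small cases $m=4,5$) are reasonable next steps consistent with what the paper's Section 4 leaves open.
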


We have shown in the previous sections this conjecture to be true for $m\in {2,3}$ (it is trivially true for $m=1$).  
In this section we make some observations and constructions that support (or at least don't contradict) this conjecture and show its connection with existing conjectures and known results. 

The following conjecture is commonly known (including in this paper) as Ryser's conjecture, though, as pointed out in \cite{transurvey}, Ryser's original conjecture was that each Latin square of order $n$ has an odd number of transversals \cite{Ryser}. It is known to be true for $n\leq 9$ \cite{MMW06}. 

\begin{conjecture} {\rm (Ryser's conjecture)}\
Each Latin square of odd order $n$ has at least one transversal. 
\end{conjecture}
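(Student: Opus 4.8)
The plan is to read Ryser's conjecture as the specialization $\lambda=1$ of Conjecture~\ref{bigandbold}: a Latin square of order $n$ is exactly a frequency square of type $F(n;1^n)$, a transversal is exactly a balanced diagonal, and the parity condition ``$(m-1)\lambda$ even'' becomes ``$n-1$ even'', i.e.\ $n$ odd. I would therefore try to push the same ``near-balanced diagonal, then local repair'' engine that proves Theorems~\ref{main2} and~\ref{main3}, but now with $m=n$ symbols. Concretely, start from the main diagonal $M$ and take as potential the number $\Phi$ of symbols of $N(n)$ that are \emph{absent} from $M$ (equivalently $n$ minus the number of distinct symbols on $M$); a transversal is precisely the case $\Phi=0$. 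Call a permutation of the rows (a product of transpositions and $3$-cycles, as used in the earlier proofs) \emph{good} if it strictly decreases $\Phi$, and aim to show a good permutation always exists until $\Phi=0$.

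As in the $m\le 3$ arguments, the routine part is to show that whenever a repeated symbol and an absent symbol sit in compatible positions a transposition or $3$-cycle of rows is good; all the difficulty is then concentrated in the \emph{obstruction case}, where $\Phi>0$ yet no good permutation exists. The goal would be to prove that this case forces a rigid global pattern which, when $n$ is odd, cannot occur. What makes this viable for $m\le 3$ is that the number of symbols is bounded, so the ``no good swap'' configuration is pinned down by a finite system of counting identities---essentially the single array $A_{2\lambda}$ in \tref{main2}, and the finitely many block patterns eliminated by \eref{eqq1}--\eref{eqq2} in \tref{main3}---after which the bad case is killed either by a parity count ($\lambda$ odd) or by the direct inequality of Lemma~\ref{halfy}.

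The hard part, and the reason the conjecture is open, is that this obstruction analysis degenerates as the number of symbols grows. With $m=n$ the counting inequalities that close the $m\le 3$ arguments (in particular the $\lambda$-versus-$\alpha$ inequality following \eref{eqq2}, and the bound $2k\le 3\lambda$ of Lemma~\ref{halfy}) become vacuous, and the family of obstruction configurations is no longer finite, so one cannot enumerate and discard them. The only leverage that survives is the parity of $n$, and it must be exploited globally rather than cell-by-cell: for \emph{even} $n$ the obstruction genuinely occurs---the Cayley table of $\mathbb{Z}_n$ has $\Phi\ge 1$ for every diagonal---so any correct argument must derive from ``$n$ odd'' an obstruction to the terminal rigid pattern that is simply invisible to the local swap analysis. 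I expect the crux to be isolating such a global parity invariant of the whole square (in the spirit of the Alon--Tarsi sign count), and showing that the small substructure one fixes at the outset actually controls it; absent that ingredient, the method that works so cleanly for $m\le 3$ stalls at exactly the last unit of $\Phi$.
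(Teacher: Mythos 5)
This statement is Ryser's conjecture, which the paper does not prove and does not claim to prove: it is stated as a \emph{conjecture}, and the only thing the paper establishes about it is the one-line observation that it would follow from Conjecture~\ref{bigandbold} by setting $\lambda=1$ with $m=n$ odd. Your proposal correctly makes that same identification (a Latin square of order $n$ is an $F(n;1^n)$ frequency square, a transversal is a $1$-balanced diagonal, and ``$(m-1)\lambda$ even'' specializes to ``$n$ odd''), but beyond that it is not a proof of anything --- and you say as much yourself. So the verdict has to be that there is a gap, and the gap is the entire argument: you never establish that a good permutation exists when $\Phi>0$, you never identify the terminal obstruction configuration, and you never produce the global parity invariant that you (rightly) flag as the missing ingredient.

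That said, your diagnosis of \emph{why} the method of Theorems~\ref{main2} and~\ref{main3} stalls is accurate and matches the structure of the paper. The $m\le 3$ proofs close because the ``no good swap'' configuration is pinned down by finitely many counting identities (the single exceptional square $A_{2\lambda}$ via Lemma~\ref{tobe}, and the block patterns eliminated by the equations \eref{eqq1}--\eref{eqq2} together with Lemma~\ref{halfy}); with $m=n$ symbols those counts give no information, and the obstruction family is unbounded. Your observation that the Cayley table of $\mathbb{Z}_n$ for even $n$ realizes the obstruction is consistent with the paper's $B_n(\lambda)$ computation. But identifying where a proof would have to go is not the same as supplying one; as a proof attempt this must be rejected, and as a reading of the paper you should note that the authors themselves offer no proof of this statement to compare against.
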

\noindent
Setting $\lambda=1$ with $m$ odd, observe that Conjecture \ref{bigandbold} implies Ryser's conjecture. 

Next, we explore a method to construct a frequency square from a Latin square, and explore how the existence or non-existence of transversals in the Latin square affects the existence or non-existence of balanced diagonals in the resultant frequency square. 
To this end,  given a Latin square $L$ of order $m$, let $L(\lambda)$ be the frequency square of type $F(m\lambda; \lambda^m)$ created by replacing each cell in $L$ containing entry $e$ with a $\lambda\times \lambda$ block of $e$'s. 

A {\em $k$-plex} in a Latin square of order $n$ is a subset in which each row, column and entry occurs exactly $k$ times. 
 Note that a $1$-plex is a transversal.

\begin{theorem}
If a Latin square has a $k$-plex and $k$ divides $\lambda$ then $L(\lambda)$ has a balanced diagonal. 
In particular, if a Latin square $L$ has a transversal, then $L(\lambda)$ has a balanced diagonal for any $\lambda$. 
\label{goodwork}
\end{theorem}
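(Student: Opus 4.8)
The plan is to reduce the search for a balanced diagonal in $L(\lambda)$ to a purely combinatorial condition on its block structure, and then to read off a solution directly from the $k$-plex.

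First I would fix notation. Write $L(\lambda)$ as an $m\times m$ array of $\lambda\times\lambda$ blocks, where the block $B_{a,b}$ sitting in row-group $a$ and column-group $b$ (for $a,b\in N(m)$) consists entirely of the symbol $L_{a,b}$. Any diagonal $D$ of $L(\lambda)$ meets each block $B_{a,b}$ in some number $n_{a,b}\geq 0$ of cells. Since $D$ uses each of the $\lambda$ rows of row-group $a$ exactly once, $\sum_{b} n_{a,b}=\lambda$ for every $a$, and likewise $\sum_{a} n_{a,b}=\lambda$ for every $b$. Moreover the number of times symbol $e$ occurs in $D$ is exactly $\sum_{(a,b):\,L_{a,b}=e} n_{a,b}$, so $D$ is balanced precisely when this sum equals $\lambda$ for each $e\in N(m)$.

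Next I would prove the converse realizability statement: given any matrix of nonnegative integers $(n_{a,b})_{a,b\in N(m)}$ whose row sums and column sums all equal $\lambda$, there is an actual diagonal of $L(\lambda)$ meeting each $B_{a,b}$ in exactly $n_{a,b}$ cells. To see this, partition the $\lambda$ rows of row-group $a$ into parts of sizes $n_{a,1},\dots,n_{a,m}$, and independently partition the $\lambda$ columns of column-group $b$ into parts of sizes $n_{1,b},\dots,n_{m,b}$; both partitions exist exactly because the relevant sums are $\lambda$. Inside block $B_{a,b}$ the designated part of $n_{a,b}$ rows and the designated part of $n_{a,b}$ columns have equal size, so we may pick any bijection between them. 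The union of these bijections over all $(a,b)$ uses each row and each column of $L(\lambda)$ exactly once, and hence is a diagonal with the prescribed block intersection numbers.

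With these two observations the problem becomes: find such a matrix $(n_{a,b})$ that is also balanced in the symbol sense above. Here I would use the $k$-plex $K$ of $L$. Let $p_{a,b}=1$ if $(a,b)\in K$ and $p_{a,b}=0$ otherwise, and set $n_{a,b}=(\lambda/k)\,p_{a,b}$, which is a nonnegative integer since $k\mid\lambda$. The defining properties of a $k$-plex are precisely that $\sum_b p_{a,b}=k$ for each row $a$, that $\sum_a p_{a,b}=k$ for each column $b$, and that $\sum_{(a,b):\,L_{a,b}=e} p_{a,b}=k$ for each entry $e$. Multiplying through by $\lambda/k$ shows that $(n_{a,b})$ has all row sums, all column sums, and all symbol sums equal to $\lambda$, so the realizability step yields a balanced diagonal of $L(\lambda)$; the final assertion follows by taking a transversal as a $1$-plex, since $1\mid\lambda$ always. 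The only step requiring any (routine) verification is the realizability claim; once it is in place the $k$-plex furnishes the balanced block matrix essentially for free, so I expect no genuine obstacle.
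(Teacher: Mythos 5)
Your proof is correct. It reaches the same destination as the paper's but via a genuinely different key lemma. The paper first decomposes the $k$-plex into $k$ diagonals $D_1,\dots,D_k$ of $L$ (using the standard fact that a $k$-regular bipartite graph decomposes into perfect matchings) and then, for the block corresponding to each cell of $D_i$, selects $\lambda/k$ cells along the block's main diagonal indexed by the $i$-th interval of $N(\lambda)$; the disjointness of these intervals across $i$ is what guarantees each row and column of $L(\lambda)$ is used once. You instead prove a self-contained realizability lemma: any nonnegative integer matrix $(n_{a,b})$ with all row and column sums equal to $\lambda$ is the block-intersection pattern of some diagonal of $L(\lambda)$, established by partitioning the rows of each row-group and the columns of each column-group according to the prescribed sizes and matching within blocks. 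Feeding in $(\lambda/k)$ times the indicator matrix of the plex then finishes the argument. Your route avoids the bipartite edge-colouring entirely and isolates a reusable lemma that applies to block intersection matrices not arising from scaled $0$--$1$ matrices; the paper's route is slightly more explicit about which cells are chosen. Both are complete and correct (indeed, your version sidesteps a small indexing slip in the paper's formula for the selected cells).
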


\begin{proof}
Let $L$ be a Latin square with a $k$-plex $K$ and let $\lambda/k=\alpha$. 
It is well-known that any regular bipartite graph partitions into perfect matchings; thus 
we may partition $K$ into diagonals  $D_1,D_2,\dots ,D_{k}$, noting, in general that 
these diagonals are not necessarily balanced (i.e. there may be repeated entries within a particular diagonal). 

For each cell $(r,c)$ in diagonal $D_i$, $1\leq i\leq k$, 
choose the entries in cells 
$$\{(\lambda(r-1)+j,\lambda(c-1)+j)\mid  i(\alpha-1) +1\leq j\leq i\alpha \}$$  
from $L(\lambda)$. Overall we have constructed a balanced diagonal in $L(\lambda)$. 
\end{proof}

Since any Latin square of order $n$ is by definition an $n$-plex, we have the following corollary. 

\begin{corollary}
Let $L$ be an $n\times n$ Latin square. If $n$ divides $\lambda$, the frequency square $L(\lambda)$ contains a balanced diagonal.  
\end{corollary}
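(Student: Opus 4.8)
The plan is to recognize that the entire Latin square $L$ is itself a plex of the largest possible index, and then invoke \tref{goodwork} directly. First I would verify that $L$, viewed as the set of all its $n^2$ cells, is an $n$-plex: in an $n\times n$ Latin square each row and each column contains exactly $n$ cells (namely all of them), and each of the $n$ symbols appears exactly $n$ times in total. These are precisely the three conditions in the definition of a $k$-plex with $k=n$, so $L$ qualifies.

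With this observation in hand, the hypothesis $n\mid\lambda$ is exactly the divisibility requirement $k\mid\lambda$ demanded by \tref{goodwork}, taking $K=L$ and $k=n$. Applying that theorem then produces a balanced diagonal in $L(\lambda)$, which is all we need. I do not expect any genuine obstacle here: the only point requiring even a moment's checking is the claim that the full Latin square constitutes an $n$-plex, and that is immediate from the definitions. Consequently the corollary follows at once from \tref{goodwork}, and the short proof should consist of little more than stating the $n$-plex observation and citing the theorem.
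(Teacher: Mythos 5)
Your proposal is correct and is exactly the paper's argument: the paper derives the corollary from the single observation that ``any Latin square of order $n$ is by definition an $n$-plex,'' then applies Theorem~\ref{goodwork} with $k=n$. Nothing further is needed.
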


The following generalization of Ryser's conjecture is implied by Conjecture 8.5 from \cite{transurvey}, originally given in \cite{Dough} (and attributed to Rodney \cite{CD}, p. 143); independently given also in~\cite{Wan02}. 

\begin{conjecture}
If $n$ is even, every Latin square of order $n$ has a $2k$-plex, for each $k\in N(n/2)$.
If $n$ is odd, every Latin square of order $n$ has a $k$-plex for each $k\in N(n)$. 
\end{conjecture}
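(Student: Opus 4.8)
The plan is to strip the statement to its core by two standard reductions before confronting the genuinely hard part. The cells of a Latin square $L$ of order $n$ not lying in a given $k$-plex themselves form an $(n-k)$-plex, and the whole array is an $n$-plex; hence $k$ is a plex order of $L$ if and only if $n-k$ is. When $n$ is even this complementation preserves parity, so it suffices to produce the even plexes $2j$ with $2 \le 2j \le n$ directly. When $n$ is odd it reverses parity, so the existence of every even plex $2j$ with $2 \le 2j \le n-1$ forces, by complementation, the existence of every odd plex as well --- in particular a transversal, which is the complement of an $(n-1)$-plex. Either way the conjecture collapses to the single uniform claim that every Latin square of order $n$ has a $2j$-plex for each $1 \le j \le \lfloor n/2 \rfloor$. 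I would record this reduction first: it isolates even plexes as the only object of study and exposes Ryser's conjecture precisely as the case $2j = n-1$ with $n$ odd.

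For the existence question itself I would work in the natural polytope. A choice of cells is a $0/1$ matrix $P$, and a $2j$-plex is exactly an integral point of the polytope cut out by the demands that every row sum, every column sum, and every symbol-count $\sum_{(r,c):L_{r,c}=e} P_{r,c}$ equal $2j$. The fractional relaxation is always feasible --- the uniform matrix $P_{r,c}=2j/n$ meets all three families of constraints --- so the entire content is integrality. Because these are three superimposed systems of equalities (a three-index transportation system), the polytope is not in general integral, which is exactly why the problem is hard. The even target is the leverage I would try to exploit: starting from a half-integral fractional plex, I would set up a $\mathbb{Z}_2$ rounding / $T$-join argument on the bipartite ``symbol-deficiency'' graph, rounding the half-integral part along an Eulerian/alternating decomposition so as to reach an integral selection without disturbing the symbol balance. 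Alternatively, a Combinatorial Nullstellensatz formulation, in which a suitable monomial coefficient in a product over rows, columns and symbols is shown to be nonzero, could certify an integral plex algebraically.

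A complementary, more combinatorial line is an interval-filling strategy: prove that the set of even plex orders of a fixed $L$ is ``convex'', i.e.\ that a $2a$-plex and a $2b$-plex force a $2c$-plex for every $a \le c \le b$, perhaps by repeatedly trading a symbol-balanced alternating cycle between the two witnessing selections. Since the full square (or its obvious halving) supplies a top value and the central value $j \approx n/4$ lies within reach of known results guaranteeing plexes of sizes in a band around $n/2$, convexity would then sweep out all intermediate even orders, leaving only the smallest cases --- above all the $2$-plex --- to be secured directly. Each even plex obtained this way also feeds back through \tref{goodwork}, yielding a balanced diagonal in the inflated square $L(\lambda)$ and tightening the link between this conjecture and \cref{bigandbold}.

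The hard part must be stated plainly. The case $2j=n-1$ with $n$ odd is, by the reduction above, exactly Ryser's conjecture, and the case $j=1$ is Rodney's conjecture on the existence of a $2$-plex; both are long-standing open problems, so no fully self-contained proof is available with present techniques. The realistic target is therefore either (i) a conditional theorem --- the interval-filling step, which would reduce the whole conjecture to its own smallest even instance --- or (ii) an unconditional proof for structured families, such as Cayley tables of groups, squares admitting an orthogonal mate, or squares of large order where probabilistic and absorption methods already furnish plexes across a wide band of sizes. I would pursue (i) first, since it localises all of the difficulty into a single base case.
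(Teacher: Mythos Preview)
The statement you were asked to prove is labelled a \emph{Conjecture} in the paper, not a theorem: the paper offers no proof, only an attribution to Rodney (via \cite{Dough,CD,Wan02}) and the remark that it generalises Ryser's conjecture. There is therefore nothing to compare your attempt against.

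Your proposal is not a proof either, and you say so explicitly: you correctly observe that the case $2j=n-1$ with $n$ odd \emph{is} Ryser's conjecture and that the case $j=1$ is the open $2$-plex conjecture, and you conclude that ``no fully self-contained proof is available with present techniques''. That assessment is accurate. Your complementation reduction to the single claim ``every Latin square of order $n$ has a $2j$-plex for all $1\le j\le\lfloor n/2\rfloor$'' is standard and correct; the remaining material (the transportation polytope, $\mathbb{Z}_2$ rounding, an interval-convexity argument on the set of plex orders) is a reasonable sketch of research directions but none of it constitutes a proof, and you do not claim otherwise.

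In short: the paper does not prove this statement, you do not prove this statement, and both the paper and you correctly treat it as open.
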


If true this conjecture, together with Theorem \ref{goodwork},  would imply that if $L$ is a Latin square of order $n$, 
$L(\lambda)$ has a balanced diagonal whenever $(n-1)\lambda$ is even, supporting Conjecture~\ref{bigandbold} above. 

Conversely, the existence of Latin squares without any odd plexes is known, for example certain Cayley tables \cite{Wan02}:
\begin{theorem}
If $G$ is a group of finite order $n$ with a non-trivial cyclic Sylow $2$-subgroup, then the Cayley table of $G$ contains no $k$-plex for any odd $k$.   
\end{theorem}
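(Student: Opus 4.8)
The plan is to reduce this combinatorial statement to a parity computation inside a suitable abelian quotient of $G$. Throughout I would regard the Cayley table as the Latin square on row-, column- and symbol-set $G$, with entry $gh$ in cell $(g,h)$, and let $K$ be a hypothetical $k$-plex. The only structural fact about $G$ I would need is that a finite group with a non-trivial cyclic Sylow $2$-subgroup $P$, say of order $2^a$ with $a\geq 1$, has a normal $2$-complement: a normal subgroup $N\trianglelefteq G$ of odd order $m=n/2^a$, so that (since $\gcd(|P|,|N|)=1$) $G/N\cong P$ is cyclic of order $2^a$. This is classical; one quick route is an induction on $|G|$ using the sign of the left regular representation, which sends a generator of $P$ to $-1$ and hence cuts out an index-$2$ subgroup whose own normal $2$-complement is characteristic and therefore normal in $G$. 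I would fix the resulting surjection $\theta\colon G\to\mathbb{Z}/2^a\mathbb{Z}$ with $\ker\theta=N$, written additively.

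The heart of the argument is to evaluate $\Sigma:=\sum_{(g,h)\in K}\theta(gh)$ in two ways. Counting by symbols, each $s\in G$ occurs as an entry of $K$ exactly $k$ times, so $\Sigma=k\sum_{s\in G}\theta(s)$. Counting by cells, and using that $\theta$ is a homomorphism into an abelian group so that $\theta(gh)=\theta(g)+\theta(h)$, and that each row index and each column index occurs exactly $k$ times in $K$, I get $\Sigma=k\sum_{g\in G}\theta(g)+k\sum_{h\in G}\theta(h)$. Writing $S:=\sum_{g\in G}\theta(g)$, the two evaluations read $\Sigma=kS$ and $\Sigma=2kS$, whence $kS=0$ in $\mathbb{Z}/2^a\mathbb{Z}$.

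Finally I would compute $S$. Since $\theta$ is surjective with kernel of size $m$, each residue $j\in\mathbb{Z}/2^a\mathbb{Z}$ is attained by exactly $m$ elements of $G$, so $S=m\sum_{j=0}^{2^a-1}j=m\,2^{a-1}(2^a-1)$. As $m$ and $2^a-1$ are both odd, $S\equiv 2^{a-1}\pmod{2^a}$; that is, $S$ is the unique element of order $2$ in $\mathbb{Z}/2^a\mathbb{Z}$. Hence for odd $k$ we obtain $kS\equiv 2^{a-1}\not\equiv 0\pmod{2^a}$, contradicting $kS=0$. Therefore no $k$-plex with $k$ odd can exist, which is the assertion.

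The genuine obstacle is the group-theoretic input: producing the homomorphism $\theta$ onto a cyclic $2$-\emph{group} (not merely onto $\mathbb{Z}/2\mathbb{Z}$), equivalently knowing that a cyclic Sylow $2$-subgroup forces a normal $2$-complement. This is exactly what guarantees that the image of $S$ lands on the involution $2^{a-1}$ rather than being killed. A coarser surjection onto $\mathbb{Z}/2\mathbb{Z}$ does not suffice, since there $S\equiv n/2\pmod 2$, which vanishes whenever $4\mid n$ (for instance for $\mathbb{Z}/4\mathbb{Z}$); so the finer target $\mathbb{Z}/2^a\mathbb{Z}$ is essential. Once $\theta$ is in hand, everything else is the short two-way counting above.
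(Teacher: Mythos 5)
The paper does not actually prove this theorem: it is imported from Wanless's paper on plexes (\cite{Wan02}) and stated without proof, so there is no in-paper argument to compare against. Your proof is correct and complete. The group-theoretic input is right: a non-trivial cyclic Sylow $2$-subgroup forces a normal $2$-complement (classical; your induction via the sign of the regular representation works, as does Burnside's transfer theorem), giving the surjection $\theta\colon G\to\mathbb{Z}/2^a\mathbb{Z}$. The double count of $\Sigma=\sum_{(g,h)\in K}\theta(gh)$ --- once over symbols giving $kS$, once over rows and columns giving $2kS$ --- forces $kS=0$, while the fibre count gives $S=m\,2^{a-1}(2^a-1)\equiv 2^{a-1}\pmod{2^a}$, so $kS\equiv 2^{a-1}\neq 0$ for odd $k$; all steps check out, and your remark that the full quotient $\mathbb{Z}/2^a\mathbb{Z}$ (not merely $\mathbb{Z}/2\mathbb{Z}$) is essential is exactly the crux. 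It is worth noting that this is the same mechanism the paper does display in the theorem immediately following, on $B_n(\lambda)$: the quantity $\Delta(r,c)=\lceil r/\lambda\rceil+\lceil c/\lambda\rceil-e$ summed over a balanced diagonal is your computation specialised to $G=\mathbb{Z}/n\mathbb{Z}$ and inflated to a frequency square, and both are instances of the ``Delta lemma'' of \cite{EW08,Ev}. So your route is essentially the standard one for this result, supplied in full where the paper only cites it.
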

For other results on the existence of Latin squares that do not contain plexes of certain sizes, see \cite{EW08,transurvey}.

Next, we construct frequency squares that do {\em not} possess a balanced diagonal. 
Let $B_n$ be the Latin square of order $n$ where cell $(i,j)$ contains $i+j$ (mod $n$); we further replace each $0$ with $n$ to be consistent with definitions earlier in the paper, for our purposes this is a cosmetic change. 
That is, $B_n$ is the addition table for the integers  modulo $n$. It is well-known that $B_n$ possesses a transversal if and only if $n$ is odd. 
Note that $B_2(\lambda)$ is equivalent to $A_{2\lambda}$ from Section 2. 
The following theorem is similar to the Delta lemma, first introduced in \cite{EW08} and \cite{Ev}. 

\begin{theorem}
The frequency square $B_n(\lambda)$ possesses a balanced diagonal if and only if $(n-1)\lambda$ is even. 
\end{theorem}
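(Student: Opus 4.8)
The plan is to work blockwise. Index each row $i$ and column $j$ of the $n\lambda\times n\lambda$ array by its block-row $\rho(i)=\lceil i/\lambda\rceil$ and block-column $\kappa(j)=\lceil j/\lambda\rceil$ in $N(n)$, so that the entry in cell $(i,j)$ is $\rho(i)+\kappa(j) \pmod n$. A diagonal then corresponds to a bijection $\sigma$ of $N(n\lambda)$, its cells being $(i,\sigma(i))$. The invariant I would track is the sum $S$ of all entries along the diagonal, reduced modulo $n$; note that the cosmetic replacement of $0$ by $n$ does not affect $S \bmod n$. This is a Delta-lemma-type argument.

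For the necessity direction, suppose $(n-1)\lambda$ is odd, so that $n$ is even and $\lambda$ is odd. For \emph{any} diagonal $\sigma$, since $\rho$ takes each value in $N(n)$ exactly $\lambda$ times and $\sigma$ is a bijection,
$$S \equiv \sum_{i}\rho(i) + \sum_i \kappa(\sigma(i)) \equiv 2\lambda\sum_{e=1}^n e \equiv \lambda n(n+1)\equiv 0 \pmod n.$$
On the other hand, a balanced diagonal contains each symbol of $N(n)$ exactly $\lambda$ times, so its entry-sum equals $\lambda\sum_{e=1}^n e=\lambda\tfrac{n(n+1)}{2}\equiv\lambda\tfrac{n}{2} \pmod n$ because $n$ is even. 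As $\lambda$ is odd, $\lambda\tfrac{n}{2}\equiv\tfrac{n}{2}\not\equiv 0\pmod n$, contradicting $S\equiv 0$. Hence no balanced diagonal exists.

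For the sufficiency direction I would reduce to \tref{goodwork} by exhibiting the right plex of $B_n$. If $n$ is odd, then $(n-1)\lambda$ is automatically even, and the cells $(i,i)$ form a transversal of $B_n$ since $i\mapsto 2i\pmod n$ is a bijection; so \tref{goodwork} with $k=1$ produces a balanced diagonal for every $\lambda$. If $n$ is even, then $(n-1)\lambda$ even forces $\lambda$ even; here the two disjoint broken diagonals $\{(i,i)\mid i\in N(n)\}$ and $\{(i,i+1)\mid i\in N(n)\}$ (column indices mod $n$) each omit exactly half the symbols, but their union covers every symbol twice, every row twice and every column twice, i.e.\ it is a $2$-plex. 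Since $2\mid\lambda$, \tref{goodwork} with $k=2$ yields a balanced diagonal, completing both directions.

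The main obstacle is the necessity direction: the content lies in isolating the correct invariant, the entry-sum modulo $n$, and checking both that it is forced to the same value on \emph{every} diagonal and that a balanced diagonal would demand a different value precisely when $n$ is even and $\lambda$ is odd. The sufficiency direction is then essentially bookkeeping: one only needs to verify the stated plex constructions and invoke \tref{goodwork}.
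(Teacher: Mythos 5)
Your proposal is correct and follows essentially the same route as the paper: sufficiency via the main-diagonal transversal ($n$ odd) or the two broken diagonals forming a $2$-plex ($n$ even, $\lambda$ even) combined with Theorem~\ref{goodwork}, and necessity via the entry-sum invariant modulo $n$, which is exactly the paper's Delta-lemma computation with $\Delta(r,c)=\lceil r/\lambda\rceil+\lceil c/\lambda\rceil-e$ phrased as evaluating the sum two ways. All congruences check out, so no changes are needed.
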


\begin{proof}
If $n$ is odd, the main diagonal of $B_n$ is a transversal. 
By Theorem \ref{goodwork}, $B_n(\lambda)$ has a balanced diagonal for any $\lambda$. 
If $n$ is even, a $2$-plex of $B_n$ is given by the main diagonal and the diagonal of cells of the form $(r,r+1)$, $r\in N(n)$ (where $n+1$ is replaced by $1$);   then apply Theorem \ref{goodwork}.  
If $\lambda$ is even, again apply Theorem \ref{goodwork} to obtain a balanced diagonal in $B_n(\lambda)$. 

Otherwise suppose that $n$ is even and $\lambda$ is odd and (for the sake of contradiction) that $B_n(\lambda)$ has a balanced diagonal $T$. 
For each cell $(r,c)$ in $B_n(\lambda)$ containing entry $e$,   
let 
$$\Delta(r,c)=\lceil r/\lambda \rceil +\lceil c/\lambda \rceil - e.$$
Observe that $\Delta(r,c)\equiv 0$ (mod $n$)  for each cell $(r,c)$ in $B_n(\lambda)$. 
However, since $T$ is a balanced diagonal, each column appears exactly once and each entry appear exactly
$\lambda$ times in $T$, so: 
$$\sum_{(r,c)\in T} \Delta(r,c)= 
\sum_{r\in N(n\lambda)} 
\lceil r/\lambda \rceil 
= 
\lambda \binom{n}{2} \equiv \frac{n}{2} \pmod{n},$$
a contradiction. 
\end{proof}

Finally we show the existence of frequency squares {\em not} equivalent to $L(\lambda)$ for some Latin square $L$, which do not contain balanced diagonals. 
Let $L$ be a frequency square of type $F(m\lambda;\lambda^m)$ and let $\alpha$ be a divisor of $m$.
Let $L_{\alpha}$ be the frequency square of type $F(m\lambda;(\lambda\alpha)^{m/\alpha})$ 
formed by replacing each entry $e$ of $L$ with $\lceil e/\alpha \rceil$. 
Observe the following. 

\begin{lemma}
If there is a balanced diagonal in a frequency square $L$ of type $F(m\lambda;\lambda^m)$ and $\alpha$ divides $m$, there is a balanced diagonal in the corresponding set of cells in
$L_{\alpha}$. 
\end{lemma}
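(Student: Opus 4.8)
The plan is to argue that the map $e \mapsto \lceil e/\alpha \rceil$ sending symbols of $L$ to symbols of $L_\alpha$ carries a balanced diagonal to a balanced diagonal, because it bundles the $m$ symbols into $m/\alpha$ consecutive groups of $\alpha$ symbols each, and balance is preserved group by group. First I would fix a balanced diagonal $M$ in $L$ and consider the same set of cells viewed inside $L_\alpha$; since $L$ and $L_\alpha$ occupy the same underlying array, a diagonal (a set of cells using each row and column exactly once) in $L$ is automatically a diagonal in $L_\alpha$, so the only thing to check is the balance condition on entries.

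Next I would carry out the counting. For each target symbol $g \in N(m/\alpha)$ of $L_\alpha$, the cells of $L_\alpha$ carrying entry $g$ are exactly the cells of $L$ whose original entry $e$ satisfies $\lceil e/\alpha \rceil = g$, i.e. $e \in \{\alpha(g-1)+1, \dots, \alpha g\}$, a block of $\alpha$ consecutive symbols of $L$. Since $M$ is balanced in $L$, each of these $\alpha$ symbols occurs exactly $\lambda$ times among the cells of $M$. Summing over the block, the number of cells of $M$ that receive entry $g$ in $L_\alpha$ is exactly $\alpha \lambda$. As $L_\alpha$ is of type $F(m\lambda; (\lambda\alpha)^{m/\alpha})$, its balance parameter is precisely $\lambda\alpha$, so the corresponding set of cells is a balanced diagonal of $L_\alpha$.

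There is really no hard part here: the statement is essentially a bookkeeping observation about how the ceiling map partitions $N(m)$ into consecutive blocks of size $\alpha$, and balance is additive over disjoint symbol classes. The only point deserving a moment's care is confirming that the relabelling parameter of $L_\alpha$ is $\lambda\alpha$ rather than $\lambda$, so that ``balanced'' in $L_\alpha$ means each of its symbols appears $\lambda\alpha$ times; once that is matched against the count $\alpha\lambda$ obtained above, the proof closes. I would therefore present it as a one-paragraph direct verification rather than anything requiring the subarray machinery used elsewhere in the paper.
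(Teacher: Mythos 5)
Your verification is correct and is exactly the bookkeeping the paper intends: the paper states this lemma as an immediate observation without a written proof, and your counting (each new symbol $g$ collects $\alpha$ old symbols, each appearing $\lambda$ times on the diagonal, giving $\alpha\lambda$, the balance parameter of $L_{\alpha}$) is the argument being left to the reader.
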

 
This lemma allows us to construct many classes of frequency squares without balanced diagonals (without directly creating any counterexamples to Conjecture \ref{bigandbold}). For example, 
if~$L$ is a frequency square of dimensions $2m\times 2m$, where $m$ is odd, and all odd entries occur in the top-left or bottom right quadrants (with even entries in the other quadrants),
then~$L_{m}$ is equivalent to $A_{2m}$, which does not have a balanced diagonal by Lemma \ref{tobe}; hence $L$ does not have a balanced diagonal.

  \let\oldthebibliography=\thebibliography
  \let\endoldthebibliography=\endthebibliography
  \renewenvironment{thebibliography}[1]{%
    \begin{oldthebibliography}{#1}%
      \setlength{\parskip}{0.4ex plus 0.1ex minus 0.1ex}%
      \setlength{\itemsep}{0.4ex plus 0.1ex minus 0.1ex}%
  }%
  {%
    \end{oldthebibliography}%
  }


\end{document}